\documentclass[12pt,reqno]{amsart}

\textwidth=6in \textheight=9.5in \topmargin=-0.5cm
\oddsidemargin=0.5cm \evensidemargin=0.5cm
\setlength{\textheight}{23cm}
\setlength{\textwidth}{16cm}
\setlength{\oddsidemargin}{0cm}
\setlength{\evensidemargin}{0cm}
\setlength{\topmargin}{0cm}

\usepackage{amssymb}
\usepackage{amsthm}
\usepackage{latexsym,amsmath,amssymb}
\usepackage{ascmac} 
\usepackage[dvipdfmx]{graphicx}
\usepackage{braket}
\usepackage{amscd}
\usepackage{amsfonts}
\usepackage{mathrsfs}
\usepackage{bm}  
\usepackage{enumitem}  
\usepackage{hhline} 
\usepackage{appendix}
\usepackage[colorlinks=true, bookmarks=true,
bookmarksnumbered=true, bookmarkstype=toc, linkcolor=blue,
urlcolor=blue, citecolor=blue]{hyperref}
\usepackage{float}
\usepackage{wrapfig}
\restylefloat{figure}
\usepackage{floatflt}

\def\XXint#1#2#3{{\setbox0=\hbox{$#1{#2#3}{\int}$}
\vcenter{\hbox{$#2#3$}}\kern-.5\wd0}}

\theoremstyle{plain} 
\newtheorem{thm}{Theorem}[section]
\newtheorem{lem}[thm]{Lemma}
\newtheorem{cor}[thm]{Corollary}
\newtheorem{prop}[thm]{Proposition}

\theoremstyle{definition} 
\newtheorem{dfn}[thm]{Definition}
\newtheorem{rem}[thm]{Remark}
%
%



\makeatletter
    
    \@addtoreset{equation}{section}
  \makeatother


\makeatletter
\def\tr{\mathop{\operator@font tr}\nolimits}
\makeatother

\makeatletter
\def\dist{\mathop{\operator@font dist}\nolimits}
\makeatother

\makeatletter
\def\div{\mathop{\operator@font div}\nolimits}
\makeatother

\makeatletter
\def\exp{\mathop{\operator@font exp}\nolimits}
\makeatother

\makeatletter
\def\essinf{\mathop{\operator@font {\itshape ess}.\inf}\nolimits}
\makeatother

\makeatletter
\def\esssup{\mathop{\operator@font {\itshape ess.}\sup}\nolimits}
\makeatother

\newcommand{\R}{\mathbb{R}}
\newcommand{\N}{\mathbb{N}}
\newcommand{\ZZ}{\mathbb{Z}}
\newcommand{\T}{\mathbb{T}}

\newcommand{\M}{{\mathcal M}}

\def\P{\mathcal{P}}


\def\L{\Lambda}

\def\phi{\varphi}

\def\l{\lambda}


\newcommand{\Tr}{\mathop{\mathrm{Tr}}\nolimits}

\begin{document}

\title[$L_p$ estimates for subsolutions of fully nonlinear equations]{
Second order $L_p$ estimates for subsolutions of fully nonlinear equations
}

\author[H. Dong]{Hongjie Dong}
\address[H. Dong]{Division of Applied Mathematics, Brown University, 182 George Street, Providence, RI 02912, USA}
\email{Hongjie\_Dong@brown.edu}

\author[S. Kitano]{Shuhei Kitano
}
\address[S. Kitano]{
Department of Applied Physics, Waseda University, Tokyo, 169-8555, Japan
}
\email{sk.koryo@moegi.waseda.jp}

\subjclass[2020]{
35B65; 35J60; 35K55.
}
\keywords{
Fully nonlinear equations, $L_p$-viscosity solutions, Calder\'on-Zygmund estimates.
}
\thanks{
H. Dong was partially supported by the NSF under agreement DMS-2055244.
S. Kitano was partially supported by a Waseda University Grant for Special Research Projects (Project number: 2023C-472). 
}

\begin{abstract}
We obtain {new} $L_p$ estimates for subsolutions to fully nonlinear equations. Based on our $L_p$ estimates, we further study several topics such as the third and fourth order derivative estimates for concave fully nonlinear equations, critical exponents of $L_p$ estimates and maximum principles, and the existence and uniqueness of solutions to fully nonlinear equations on the torus with free terms in the $L_p$ spaces or in the space of Radon measures.
\end{abstract}

\maketitle

\section{Introduction}
In this paper, we study the elliptic and parabolic Pucci equations:
\[
-\P^{\pm}(D^2u)=f\quad\mbox{and}\quad v_t-\P^{\pm}(D^2v)=f,
\]
where $\P^{\pm}(M)$ are the Pucci operators defined by
\begin{align*}
    \P^+(M)&:=\Lambda\Tr(M^+)-\lambda\Tr(M^-),\\
    \P^-(M)&:=\lambda\Tr(M^+)-\Lambda\Tr(M^-)
\end{align*}
for $0<\lambda\leq\Lambda$ and a symmetric matrix $M$.

We present two different $W^{2}_p$ estimates for the Pucci equations:
\begin{itemize}
    \item[(1)] If $u$ is a subsolution of $-\P^+(D^2u)=f$ in the unit ball $B_1\subset \R^n$ centered at $0$, then
    \[
    \|D^2u\|_{L_\delta(B_{1/2})}\leq C(\|f\|_{L_n(B_1)}+\|u\|_{L_\infty(B_1)})
    \]
    for some $\delta\in (0,1)$.
    \item[(2)] If $u$ is a subsolution of $-\P^-(D^2u)=f$ in $B_1$ and $\lambda<\Lambda$, then
    \[
    \|D^2u\|_{L_1(B_{1/2})}\leq C(\|f\|_{L_1(B_1)}+\|u\|_{L_1(B_1)}).
    \]
\end{itemize}
We also provide the parabolic versions of these estimates. See Theorems \ref{t2}, \ref{t3}, and \ref{t1}.

\textcolor{black}{
The $W^2_\delta$ estimate was initially established by Lin \cite{L} for $C^{1,1}$ solutions and later extended to viscosity solutions by Caffarelli \cite{C}. 
For more recent studies on the $W^2_\delta$ estimate in the homogeneous case, refer to \cite{ASS, K10,M,NT23,NTar}, which discuss applications of $W^2_\delta$ estimates to $W^2_p$ and $C^{2,\alpha}$ estimates, decay behavior of $\delta$ as $\L/\l \to \infty$, and $W^2_\delta$ estimates for subsolutions. 
In this paper, we show that $W^2_\delta$ estimates also apply to subsolutions of Pucci equations with forcing terms.
Thanks to our relaxed conditions, we further establish $W^4_\delta$ estimates for concave fully nonlinear equations
} 

One notable aspect of the $W^2_1$ estimate (2) is that for the Poisson equation: $-\Delta u=f$, the $W^2_p$ estimate fails at $p=1$, while Calder\'on and Zygmund obtained it for any $p\in (1,\infty)$.
The other is the existence of exponents $p$ where the $W^2_p$ estimate fails, lying between those where the estimate holds.
This is very different from the properties of linear equations expressed in the interpolation theorem.
Caffarelli established the $W^{2}_p$ estimate for fully nonlinear elliptic equations when $p>n$ in \cite{C} and Wang extended it to fully nonlinear parabolic equations when $p>n+1$ in \cite{W1,W2}.
The range of $p$ was further extended by Escauriaza \cite{E} to $p>n-\varepsilon$ in the elliptic case and $p>n+1-\varepsilon$ in the parabolic case, where $\varepsilon=\varepsilon(n,\lambda,\Lambda)>0$ is a small constant.
On the other hand, in the \textcolor{black}{spirit} of Pucci \cite{P}, one can construct examples which demonstrate the failure of the $W^{2}_p$ estimate at $p_k=k\Lambda/(\lambda(k-1)+\Lambda)$ for $k=2,3,...,n$.
In light of these results, one of natural question arises: Does the $W^2_p$ estimate only fails at the Pucci exponents $p_k$?
We consider three possibilities:
\begin{itemize}
    \item[(i)] The $W^2_p$ (or $W^{2,1}_p$) estimates only fail at $p=p_k$,
    \item[(ii)] The $W^2_p$ (or $W^{2,1}_p$) estimates only fail in an interval including $p_k$, and
    \item[(iii)] otherwise.
\end{itemize}
In this paper, we demonstrate that {the} case (ii) occurs for the parabolic Pucci equations when $n\Lambda/(\lambda(n-1)+\Lambda)>n/2+1$. {See Theorem \ref{t4}.}
However it remains unknown whether {the} case (ii) or (iii) is also possible for the elliptic Pucci equations.

The maximum principle is an important tool of analysis to the fully nonlinear equations.
Aleksandrov \cite{A61,A63} established the $L_\infty$ estimates for solutions of the elliptic Pucci equation by the $L_n$ norm of the forcing term.
The range of $p$ for which the maximum principle holds was improved by Fabes and Stroock \cite{FS}.
There are related studies, which concerns the fully nonlinear equations of the form:
\[
-\alpha\Delta u-(1-n\alpha)E_1(D^2u)=f,
\]
where $\alpha\in (0,1/n)$ and $E_1(M)$ is the minimum eigenvalue of $M$.
Pucci \cite{P} conjectured the critical exponent; the maximum principle holds only for $p>\max\{n/2,n(1-(n-1)\alpha\}$ for this equation.
Astala, Iwaniec, and Martin solved {the} Pucci conjecture when $n=2$ in \cite{AIM}.
Kuo and Trudinger obtained the maximum principle when $p>n/2$ is an integer in \cite{KT}.

The parabolic analogue of the maximum principle was obtained by Krylov \cite{K} and later independently by Tso \cite{Tso}, which provides $L_\infty$ estimates by the $L_{n+1}$ norm of the forcing term.

\textcolor{black}{
Another important contribution of this paper is the establishment of global $W^2_p$ estimates and a unique existence theorem for solutions to Pucci equations on the flat torus. The regularity theory of fully nonlinear equations on the flat torus has been primarily motivated by applications to homogenization problems and has mostly been studied in the framework of continuous viscosity solutions (see \cite{Evans}). In this work, we extend the theory to the $L_p$-strong solution framework. It is also worth noting that when the forcing term $f$ belongs to $L_1$ or is a Radon measure, the existence of continuous solutions cannot be expected, rendering the $L_p$-viscosity solution theory inapplicable in this context. Instead, we establish a unique existence theorem for the largest subsolution (see Theorem \ref{thm414}).}

The remainder of this article is organized as follows. In Section 2, we introduce the necessary notation and definitions. Section 3 focuses on establishing the $W^{2,1}_\delta$ estimates for subsolutions of the Pucci equations. In Section 4, we address the $W^{2,1}_1$ estimates and examine the properties of critical exponents related to $W^{2,1}_p$ estimates and maximum principles. Finally, in Section 5, we derive global estimates and establish existence and uniqueness results for the Pucci equations on the torus.

\section{Notation}
We list some notation:
\begin{itemize}
    \item $B_r$ is the open ball of radius $r>0$ centered $0$,
    \item $Q_r=B_r\times(-r^2,0)$ is the open cylinder,
    \item $\partial B_r$ is the boundary of $B_r$,
    \item $\partial_p (B_r\times (a,b)):=\partial B_r\times (a,b)\cup \overline{B_r}\times \{a\}$ is the parabolic boundary of $B_r\times (a,b)$,
    \item $\|f\|_{L_p(\Omega)}$ is the $L_p(\Omega)$ norm of $f$,
    \item $Du$ is the gradient $(u_{x_i})$ and $D^2u$ is the spatial Hessian matrix $(u_{x_i,x_j})$,
    \item $X^+,X^-\geq O$ are the positive and negative parts of a symmetric matrix $X$,
    \item $|X|$ is the sum of the absolute value of the eigenvalues of $X$,
    \item $x\otimes y$ is the tensor of $x,y\in \R^n$,
    \item $W^2_p(\Omega)$ is the Sobolev space; $u\in W^2_p(\Omega)$ iff $u,Du,D^2u\in L_p(\Omega)$,
    \item $W^{2,1}_p(\Omega\times (a,b))$ is the parabolic Sobolev space; $v\in W^{2,1}_p(\Omega\times (a,b))$ iff $v$, $Dv$, $D^2v$, $v_t\in L_p(\Omega\times (a,b))$.
    \item $\M(\Omega)$ is the space of Radon measures.
    We define
    \[
    \|f\|_{\M(\Omega)}=\sup\left\{\int_{\Omega}\phi df:\phi\in C_0(\Omega),\|\phi\|_{L_\infty(\Omega)}\leq1\right\}.
    \]
\end{itemize}


We recall the definition of $L_p$-viscosity solutions in \cite{CCKS}.
\begin{dfn}\label{d22}
    Let $\Omega\subset\R^n$ be open and $f\in L_p(\Omega)$.
    A function $u\in C(\Omega)$ is an $L_p$-viscosity subsolution of $-\P^{\pm}(D^2u)=f$ in $\Omega$ if for all $\phi\in W^2_{p,\text{loc}}(\Omega)$, whenever $\theta>0$ , $\mathcal{O}\subset\Omega$ is open and
    \[
    -\P^{\pm}(D^2\phi(x))-f(x)\geq \theta\quad\mbox{a.e. in }\mathcal{O},
    \]
    then $u-\phi$ cannot have a local maximum in $\mathcal{O}$.

    A function $u\in C(\Omega)$ is an $L_p$-viscosity supersolution of $-\P^{\pm}(D^2u)=f$ in $\Omega$ if $v:=-u$ is an $L_p$-viscosity subsolution of $-\P^{\pm}(-D^2v)=f$ in $\Omega$.

    Finally, $u\in C(\Omega)$ is an $L_p$-viscosity solution of $-\P^{\pm}(D^2u)=f$ in $\Omega$ if is both an $L_p$-viscosity subsolution and an $L_p$-viscosity supersolution of $-\P^{\pm}(D^2u)=f$ in $\Omega$.
\end{dfn}
We have analogous definitions of $L_p$-viscosity solutions of the elliptic equations: $\tau u-\P^{\pm}(D^2u)=f$ and the parabolic equations: $u_t-\P^{\pm}(D^2u)=f$.

\section{\texorpdfstring{$W^{2,1}_\delta$}{} estimates for {Pucci maximum} equations}
\subsection{Elliptic Case}
We first prove the $W^2_\delta$ estimate for the elliptic Pucci equations.
\begin{thm}\label{t2}
    There exist constants $C_0=C_0(n,\lambda,\Lambda)>0$ and $\delta_0=\delta_0(n,\lambda,\Lambda)\in (0,1)$ such that for $f\in L_{n}(B_1)$, if $u\in W^{2}_n(B_1)$ satisfies
\begin{equation*}
 -\P^+(D^2u)\leq f\quad\mbox{a.e. in }B_1,
\end{equation*}
then
\begin{equation*}
    \|D^2u\|_{L_{\delta_0}(B_{1/2})}\leq C_0(\|f\|_{L_n(B_1)}+\|u\|_{L_\infty(B_1)}).
\end{equation*}
\end{thm}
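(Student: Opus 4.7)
Proof plan. The plan is to invoke the classical Caffarelli--Cabré Calderón--Zygmund iteration for Pucci supersolutions, and then use the subsolution inequality algebraically to promote the resulting one-sided Hessian bound to a two-sided one. The earlier proofs of Lin and Caffarelli applied this iteration twice (once to $u$, once to $-u$) because they worked in the full solution class $\underline{S}\cap\bar{S}$; here only one application is needed thanks to the pointwise identity
$$\tr((D^2u)^-)(x) \le \frac{\Lambda}{\lambda}\tr((D^2u)^+)(x) + \frac{f(x)^+}{\lambda}\quad\mbox{a.e.,}$$
which is just the hypothesis $\P^+(D^2u) \ge -f$ rearranged. Thus the negative part of $D^2u$ is algebraically controlled by the positive part (plus the source), and it suffices to produce an $L_{\delta_0}$ bound on $\tr((D^2u)^+)$.

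I would begin by dualizing: set $v := -u$. From $\P^+(-M) = -\P^-(M)$, the hypothesis becomes $\P^-(D^2v) \le f$ a.e., so $v \in W^2_n$ is an $L_n$-viscosity supersolution of the Pucci minimal operator in the sense of Definition \ref{d22}. I would then apply the standard Caffarelli--Cabré power-decay estimate to $v$. Its proof uses only the supersolution condition: the ABP estimate, the construction of a paraboloid barrier on a cube, and a Calderón--Zygmund cube-stacking iteration. This yields, for some $\delta_0 = \delta_0(n,\lambda,\Lambda) \in (0,1)$,
$$\|\underline{\Theta}(v)\|_{L_{\delta_0}(Q_1)} \le C\bigl(\|u\|_{L_\infty(B_{6\sqrt{n}})} + \|f\|_{L_n(B_{6\sqrt{n}})}\bigr),$$
where $\underline{\Theta}(v)(x)$ denotes the infimum of openings $M$ of paraboloids tangent to $v$ from below at $x$ over $B_{6\sqrt{n}}$.

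I would then convert this one-sided control into a pointwise bound on the full Hessian. Since $u \in W^2_n$, the second-order Taylor expansion of $u$ (and of $v$) holds at a.e.\ point, so the tangent-paraboloid condition translates into the a.e.\ matrix inequality $D^2v(x) \ge -\underline{\Theta}(v)(x)\,I$, equivalently $D^2u(x) \le \underline{\Theta}(v)(x)\,I$, hence $\tr((D^2u)^+)(x) \le n\,\underline{\Theta}(v)(x)$. Combined with the algebraic identity above, this gives the a.e.\ pointwise bound
$$|D^2u(x)| = \tr((D^2u)^+)(x) + \tr((D^2u)^-)(x) \le C\bigl(\underline{\Theta}(v)(x) + |f(x)|\bigr).$$
Taking $L_{\delta_0}$ norms and using the embedding $L_n \hookrightarrow L_{\delta_0}$ on the bounded domain $B_{6\sqrt{n}}$ then yields the claim.

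The main obstacle I expect is verifying that the one-scale measure estimate carries over to a merely $W^2_n$ supersolution rather than a continuous viscosity supersolution. This is a standard extension via the $L^n$-viscosity framework of \cite{CCKS}, but requires care both in the definition of the good set $\underline{G}_M$ and in the a.e.\ identification of $\underline{\Theta}(v)$ with the relevant extremal eigenvalue of $D^2v$ at points of twice Taylor differentiability. A secondary bookkeeping step is to run the cube-stacking iteration in cubes $Q_r$ embedded in the ball $B_{6\sqrt{n}}$, which accounts for the enlargement factor $6\sqrt{n}$ in the hypothesis.
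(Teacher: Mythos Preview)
Your proposal is correct and takes essentially the same approach as the paper. The paper works directly with $u$ and paraboloids touching from above (its set $\overline{G}_h(u,Q_1)$ and the decay $|\overline{A}_h|\le Ch^{-\mu_0}$ from Caffarelli's Corollary 3), while you dualize to $v=-u$ and paraboloids from below via $\underline{\Theta}(v)$; these are equivalent formulations, and the key new step---the algebraic inequality $\lambda\,\tr((D^2u)^-)\le \Lambda\,\tr((D^2u)^+)+f$ derived from the subsolution condition to upgrade the one-sided Hessian bound---is identical in both arguments.
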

For $h>0$, let $\overline{G}_h(u,B_{1/2})\subset B_{1/2}$ be such that $x_0\in \overline{G}_h(u,B_{1/2})$ if and only if there exists a convex paraboloid $P$ of opening $h>0$ such that $P(x_0)=u(x_0)$ and $u(x)\leq P(x)$ for $x\in B_{1/2}$.
We also define $\overline{A}_h(u,B_{1/2}):=B_{1/2}\setminus \overline{G}_h(u,B_{1/2})$.
We use the following property obtained by Caffarelli \cite{C} (see Corollary 3 in \cite{C}):
\begin{prop}\label{prop11}
    There exist constants $\mu_0,C_1,m_0>0$ with the following property:
    Let $u,f$ be as in Theorem \ref{t2}.
    Suppose $\|u\|_{L_\infty(B_1)}\leq 1$ and $\|f\|_{L_n(B_{1})}\leq m_0$.
    Then
    \begin{equation*}
        |\overline{A}_h(u,B_{1/2})|\leq C_1 h^{-\mu_0}
    \end{equation*}
    for $h>0$.
\end{prop}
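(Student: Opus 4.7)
The plan follows the classical Caffarelli strategy: prove a single-scale measure estimate via the ABP maximum principle applied to $u$ plus a fixed barrier, then iterate through a Calderón--Zygmund cube decomposition to obtain geometric decay $|A_k|\leq (1-\sigma)^k$ along powers $h=M^k$, from which the polynomial bound $|\overline{A}_h|\leq C_1 h^{-\mu_0}$ with $\mu_0 = -\log(1-\sigma)/\log M$ follows.

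\textbf{Base step.} The first target is to produce universal $M>1$ and $\sigma\in(0,1)$ such that
\[
|\overline{G}_M(u,Q_1)|\geq \sigma.
\]
I would fix, once and for all, a smooth barrier $\Phi$ on $\overline{B_{6\sqrt{n}}}$ with $\Phi\leq -2$ on $Q_3$, $\Phi\geq 0$ on $\partial B_{6\sqrt{n}}$, and $-\P^+(D^2\Phi)\leq \chi$ for a bounded bump $\chi$ supported in $Q_1$; such a $\Phi$ is built from a radial power (or exponential) profile on the annulus $B_{6\sqrt{n}}\setminus B_\rho$ glued to a smooth cap on $B_\rho$. Then $w:=u+\Phi$ satisfies $-\P^+(D^2w)\leq f+\chi$ a.e., with $\|f+\chi\|_{L_n(B_{6\sqrt{n}})}$ universally bounded when $m_0$ is small, together with $w\leq -1$ on $Q_3$ and $w\geq -1$ near $\partial B_{6\sqrt{n}}$. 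Applying ABP to the concave envelope of $w$ on $B_{6\sqrt{n}}$ yields a universal lower bound $\geq\sigma$ on the measure of the set of $y\in Q_1$ where $w$ is touched from above on $B_{6\sqrt{n}}$ by an affine function. Subtracting $\Phi$, whose $C^2$-norm on $Q_1$ is universal, converts this into a paraboloid of universal opening $M$ touching $u$ from above on $Q_1$, so $y\in \overline{G}_M(u,Q_1)$.

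\textbf{Iteration.} Let $A_k:=\overline{A}_{M^k}(u,Q_1)$, a decreasing sequence. The goal $|A_k|\leq (1-\sigma)^k$ is proved by induction from the dyadic implication
\[
|A_k\cap Q|>(1-\sigma)|Q|\implies \widetilde Q\subset A_{k-1},
\]
for any dyadic subcube $Q\subset Q_1$ with predecessor $\widetilde Q$; combined with the standard Calderón--Zygmund decomposition this yields $|A_k|\leq (1-\sigma)|A_{k-1}|$. I would prove the implication by contrapositive: assume $x_0\in \widetilde Q\setminus A_{k-1}$, so some paraboloid $P_0$ of opening $M^{k-1}$ touches $u$ at $x_0$ and dominates it on $Q_1$. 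Using the affine bijection $T:Q_1\to Q$ (whose pullback satisfies $T^{-1}(B_{6\sqrt{n}})\supset B_{6\sqrt{n}}$ since the side length of $Q$ is at most $1/2$) and a universal normalizing constant $\gamma$, set $\bar u(y):=\gamma^{-1}(u-P_0)(Ty)$. A direct computation, using $D^2P_0 = M^{k-1}I$ and the monotonicity of $\P^+$, shows that $\bar u$ is a subsolution of a rescaled Pucci inequality on $B_{6\sqrt{n}}$ whose forcing has small $L_n$-norm, and $\|\bar u\|_{L_\infty(B_{6\sqrt{n}})}\leq 1$ (using $\|u\|_\infty\leq 1$ together with the standard control on the size of $P_0$ over $B_{6\sqrt{n}}$ coming from $|\nabla P_0(x_0)|\lesssim 1$ and opening $M^{k-1}$, absorbed into $\gamma$). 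The base step then applies to $\bar u$ and delivers a $\sigma$-density subset of $Q_1$ in $\overline{G}_M(\bar u,Q_1)$; unscaling and adding $P_0$ back produces paraboloids of opening $\sim M\cdot M^{k-1}=M^k$ that touch $u$ from above at a $\sigma$-density subset of $Q$ and dominate $u$ on all of $Q_1$, because $P_0$ itself already dominates $u$ on $Q_1\setminus Q$. This contradicts the density hypothesis on $A_k\cap Q$.

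\textbf{Conclusion and main obstacle.} Given $h>0$, choose $k$ with $M^{k-1}\leq h<M^k$; then $|\overline{A}_h(u,Q_1)|\leq |A_{k-1}|\leq (1-\sigma)^{k-1}\leq C_1 h^{-\mu_0}$. The main obstacle is the base step: one must engineer $\Phi$ so that its $C^2$-norm and the $L_n$-norm of $-\P^+(D^2\Phi)$ are universal while simultaneously pushing $w$ strictly below $-1$ on $Q_3$; the enlargement to $B_{6\sqrt{n}}$ is precisely what provides enough room for this barrier region and for rescaling dyadic subcubes of $Q_1$ back to the full ball in the iteration. A secondary delicate point is the \emph{global} character of the paraboloids in the definition of $\overline{G}_h$ --- they must dominate $u$ on the entirety of $Q_1$, not just near the touching point --- which is handled in the iteration by carrying $P_0$ along as a background dominator outside the small subcube $Q$.
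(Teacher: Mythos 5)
The paper does not prove Proposition~\ref{prop11}; it is quoted directly from Caffarelli's Corollary~3 in \cite{C}, which is a statement for supersolutions and paraboloids touching from \emph{below}, and it is applied here to $-u$ via the identity $\overline{A}_h(u,Q_1)=\underline{A}_h(-u,Q_1)$ together with the fact that $-\P^+(D^2u)\leq f$ is equivalent to $\P^-(D^2(-u))\leq f$. You instead reconstruct the Caffarelli argument on the subsolution side, which is a legitimate route, but the dualization is carried out with inconsistent signs and the iteration does not close as written. (a) The ABP estimate via the concave envelope produces an interior contact set of positive measure only when $w$ rises \emph{above} its boundary values; with $\Phi\leq -2$ on $Q_3$, $\Phi\geq0$ on $\partial B_{6\sqrt n}$, you get $w=u+\Phi\leq-1$ on $Q_3$ and $w\geq -1$ near $\partial B_{6\sqrt n}$, so $w$ dips \emph{below} the boundary, the concave envelope may touch $w$ only on $\partial B_{6\sqrt n}$, and ABP returns nothing. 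The barrier must be oriented the other way: $\Phi\geq2$ on $Q_3$, $\Phi\leq0$ near $\partial B_{6\sqrt n}$. (b) For $w=u+\Phi$ to remain a subsolution of a maximal Pucci inequality with controlled forcing, one uses $\P^+(D^2u+D^2\Phi)\geq\P^+(D^2u)+\P^-(D^2\Phi)$, so the requirement on the barrier is $-\P^-(D^2\Phi)\leq\chi$, not the strictly weaker $-\P^+(D^2\Phi)\leq\chi$ that you wrote. (c) The claimed bound $|\nabla P_0(x_0)|\lesssim1$ is false: from $P_0\geq u\geq -1$ on $Q_1$, $P_0(x_0)=u(x_0)\leq1$ and $D^2P_0=M^{k-1}\mathrm{I}$ the sharp a priori bound is $|\nabla P_0(x_0)|\lesssim\sqrt{M^{k-1}}$, which grows with $k$; consequently no $k$-independent choice of $\gamma$ makes $\|\bar u\|_{L_\infty(B_{6\sqrt n})}\leq1$ across scales, and your appeal to ``$\|u\|_\infty\leq1$'' at that point does not control anything.

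The standard way to close (c) is not to impose a two-sided bound on a rescaled copy of $u-P_0$, but to pass to the \emph{nonnegative} difference $v:=P_0-u\geq 0$ on $Q_1$ with $v(x_0)=0$, and observe that the base (barrier + ABP) step needs only the one-sided information $v\geq0$ and ``$v$ is small somewhere in the interior''; no upper bound on $v$ is used anywhere. Since $v$ is a supersolution of a minimal Pucci inequality whose forcing is $f$ plus a universal constant, and since ABP integrates the forcing only over the contact set of the envelope, a merely bounded (not small) forcing still gives a universal lower bound on the contact-set measure. This is precisely how the original Caffarelli (and Caffarelli--Cabr\'{e}) argument avoids ever controlling the gradient of the touching paraboloid. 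In short: either carry out this one-sided renormalization carefully, or, more simply, observe that Proposition~\ref{prop11} is the exact image under $u\mapsto -u$ of Caffarelli's Corollary~3 and cite it as the paper does.
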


\begin{proof}[Proof of Theorem \ref{t2}]
    Without loss of generality, we may suppose $\|u\|_{L_\infty({B_1)}}\leq 1$ and $\|f\|_{L_n({B_1)}}\leq m_0$.
    Note that
    \begin{equation*}
        \overline{G}_h(u,B_{1/2})\subset B_{1/2}\cap\{D^2u^+\leq h\mathrm{I}\}.
    \end{equation*}
    Therefore, combining {with} Proposition \ref{prop11}, we have
    \begin{equation*}
        |B_{1/2}\cap\{D^2u^+> hI\}|\leq |\overline{A}_h(u,B_{1/2})|\leq C_1 h^{-\mu_0}.
    \end{equation*}
    This implies
    \begin{equation*}
        \|D^2u^+\|_{L_{\delta_0}(B_{1/2})}\leq C
    \end{equation*}
     for $0<\delta_0<\mu_0$.
     On the other hand, we have
     \begin{equation*}
         -\P^+(D^2u)=-\Lambda\Tr(D^2u^+)+\lambda\Tr(D^2u^-)\leq f
     \end{equation*}
     and thus
     \begin{equation*}
         \lambda\Tr(D^2u^-)\leq \Lambda\Tr(D^2u^+)+f\quad\mbox{in }B_{1/2}.
     \end{equation*}
     This is the key observation in order to relax the condition.
     Hence
     \begin{equation*}
         \|D^2u^-\|_{L_{\delta_0}(B_{1/2})}\leq C(\|D^2u^+\|_{L_{\delta_0}(B_{1/2})}+\|f\|_{L_{\delta_0}(B_{1/2})}).
     \end{equation*}
     The theorem is proved.
\end{proof}
\begin{cor}\label{c2}
Let $F$ be concave and elliptic in the sense that
\[
\P^-(X-Y)\leq F(X)-F(Y)\leq \P^+(X-Y)
\]
for symmetric matrices $X,Y$.
For $f\in W^2_{n}(B_1)$, if $u\in W^{4}_n(B_1)$ is a solution of
\begin{equation*}
    -F(D^2u)=f\quad\mbox{a.e. in }B_1,
\end{equation*}
then
\begin{equation}\label{e331}
    \|D^4u\|_{L_{\delta_0}(B_{1/2})}\leq C_0(\|D^2f\|_{L_n(B_1)}+\|D^2u\|_{L_\infty(B_1)}),
\end{equation}
where $C_0$ and $\delta_0$ are as in Theorem \ref{t2}.
\end{cor}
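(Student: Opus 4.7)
The plan is to reduce the fourth-derivative estimate for $u$ to a second-derivative estimate for the pure second derivative $u_{ee}$ of $u$. Concretely, for each unit vector $e$ we will show that $w:=u_{ee}$ satisfies the one-sided Pucci inequality
\[
-\P^+(D^2w)\le f_{ee}\quad\text{a.e. in }B_{6\sqrt n},
\]
so that Theorem \ref{t2} applied to $w$ gives $\|D^2u_{ee}\|_{L_{\delta_0}(Q_1)}\le C_0(\|f_{ee}\|_{L_n(B_{6\sqrt n})}+\|u_{ee}\|_{L_\infty(B_{6\sqrt n})})$. Taking $e$ in the finite collection $\{e_i\}$ and $\{(e_i+e_j)/\sqrt 2\}$ and applying the polarization identity $u_{ij kl}=\tfrac12\bigl(u_{(e_i+e_j)(e_i+e_j)kl}-u_{e_ie_ikl}-u_{e_je_jkl}\bigr)$ recovers every component of $D^4u$, and bounding $\|f_{ee}\|_{L_n}\le C\|D^2f\|_{L_n}$ and $\|u_{ee}\|_{L_\infty}\le C\|D^2u\|_{L_\infty}$ yields \eqref{e331}.

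The derivation of the Pucci inequality for $u_{ee}$ is the heart of the argument, and I would do it via a second-order difference quotient rather than a formal double differentiation of the equation, since $F$ is only assumed concave and uniformly elliptic. Fix a unit vector $e$ and small $h>0$ and set
\[
w_h(x)=\frac{u(x+he)+u(x-he)-2u(x)}{h^2},\qquad g_h(x)=\frac{f(x+he)+f(x-he)-2f(x)}{h^2}.
\]
Concavity of $F$ gives $F(X)+F(Y)\le 2F(\tfrac{X+Y}{2})$, and with $X=D^2u(x+he)$, $Y=D^2u(x-he)$ the right-hand side becomes $2F(D^2u(x)+\tfrac{h^2}{2}D^2w_h(x))$. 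Combined with $F(D^2u)=-f$ a.e.\ and the uniform ellipticity bound $F(D^2u+M)-F(D^2u)\le\P^+(M)$ applied to $M=\tfrac{h^2}{2}D^2w_h$, this yields
\[
-f(x+he)-f(x-he)+2f(x)\le h^2\P^+(D^2w_h(x))\quad\text{a.e. on }B_{6\sqrt n-h},
\]
i.e.\ $-\P^+(D^2w_h)\le g_h$ a.e.

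After a standard rescaling so that Theorem \ref{t2} applies on the slightly shrunken ball $B_{6\sqrt n-h}$, I obtain
\[
\|D^2w_h\|_{L_{\delta_0}(Q_{1-h'})}\le C\bigl(\|g_h\|_{L_n(B_{6\sqrt n-h})}+\|w_h\|_{L_\infty(B_{6\sqrt n-h})}\bigr).
\]
Because $u\in W^4_n$ and $f\in W^2_n$, as $h\downarrow 0$ one has $g_h\to f_{ee}$ in $L_n$, $w_h\to u_{ee}$ uniformly (hence $\|w_h\|_{L_\infty}\le\|D^2u\|_{L_\infty}$), and $D^2w_h\to D^2u_{ee}$ in $L_n$, along a subsequence a.e.; Fatou's lemma passes the $L_{\delta_0}$-norm through the limit and gives the claimed bound on $\|D^2u_{ee}\|_{L_{\delta_0}(Q_1)}$.

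The main technical nuisance I anticipate is the bookkeeping around the shrinking domain and the $h\to 0$ limit: one has to verify that the $L_n$-convergence of $D^2w_h$ to $D^2u_{ee}$ (which follows from the $W^4_n$-hypothesis by the standard characterization of Sobolev functions via difference quotients) is sufficient to take lower semicontinuous $L_{\delta_0}$-limits, and that the rescaled constant from Theorem \ref{t2} is uniform in $h$. Once those are in place, the concavity inequality above is what does all the work, exactly because the second-difference trick converts $F$-concavity directly into the one-sided Pucci inequality needed to invoke the relaxed estimate of Theorem \ref{t2}.
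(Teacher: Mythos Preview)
Your proposal is correct and follows essentially the same route as the paper: derive the one-sided inequality $-\P^+(D^2u_{ee})\le f_{ee}$ from concavity via second differences and then invoke Theorem~\ref{t2}. The only cosmetic difference is that the paper first passes to the limit $h\to0$ in the difference-quotient inequality and then applies Theorem~\ref{t2} directly to $u_{ee}\in W^2_n(B_{6\sqrt n})$, whereas you apply Theorem~\ref{t2} to $w_h$ and take the limit afterward; both orders are fine, and your added remarks on polarization and the $h\to0$ bookkeeping fill in details the paper leaves implicit.
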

\begin{proof}
    For $h>0$ and $e\in\partial B_1$, using the concavity of $F$, we have
    \[
    \textcolor{black}{-F\left(D^2\left(\frac{u(x+he)+u(x-he)}{2}\right)\right)\leq -\frac{1}{2}(F(D^2u(x+he))-F(D^2u(x-he)))}
    \]
    and from the assumption,
    \textcolor{black}{
    \begin{align*}
    &-\P^+\left(D^2\left(\frac{u(x+he)+u(x-he)-2u(x)}{2}\right)\right)\\
    &\leq -F\left(D^2\left(\frac{u(x+he)+u(x-he)}{2}\right)\right)+F(D^2u(x)) \\
    &\leq \frac{1}{2}(f(x+he)+f(x-he))-f(x).
    \end{align*}
    }
    Hence by letting $h\to0$,
    \begin{equation*}
        -\P^+(D^2u_{ee})\leq f_{ee}\quad\mbox{in }B_1.
    \end{equation*}
    Therefore \eqref{e331} follows from Theorem \ref{t2}.
\end{proof}
\subsection{Parabolic Case} Next we present the parabolic analogue of the estimate.
\begin{thm}\label{t3}
    There exist constants $C_2=C_2(n,\lambda,\Lambda)>0$ and $\delta_1=\delta_1(n,\lambda,\Lambda)\in (0,1)$ such that for $f\in L_{n+1}(Q_{1})$, if $v\in W^{2,1}_{n+1}(Q_{1})$ satisfies
    \[
    v_t-\P^+(D^2v)\leq f\quad\mbox{a.e. in }Q_{1},
    \]
    then
    \begin{align}
    &\|v_t\|_{L_{\delta_1}(Q_{1/2})}+\|D^2v\|_{L_{\delta_1}(Q_{1/2})}\leq C_2(\|f\|_{L_{n+1}(Q_{1})}+\|v\|_{L_\infty(Q_{1})}).\label{e2}
\end{align}
\end{thm}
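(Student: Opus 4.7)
The plan is to mimic the elliptic argument in Theorem~\ref{t2}, replacing Caffarelli's decay estimate (Proposition~\ref{prop11}) with its parabolic counterpart, which is the parabolic ABP/paraboloid decay established by Wang in \cite{W1,W2}. After the standard normalization, it suffices to prove
\[
\|v_t\|_{L_{\delta_1}(Q_1\times(1/3,2/3))}+\|D^2v\|_{L_{\delta_1}(Q_1\times(1/3,2/3))}\leq C
\]
under the assumption that $\|v\|_{L_\infty(B_{6\sqrt n}\times(0,1))}\leq 1$ and $\|f\|_{L_{n+1}(B_{6\sqrt n}\times(0,1))}\leq m_1$ for a small constant $m_1=m_1(n,\lambda,\Lambda)>0$.

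I would first set up the parabolic analogues of $\overline{G}_h$ and $\overline{A}_h$: a point $(x_0,t_0)\in Q_1\times(1/3,2/3)$ lies in $\overline{G}_h$ if there is a parabolic paraboloid of opening $h$ of the form
\[
P(x,t)=a+b\cdot(x-x_0)+\tfrac{h}{2}|x-x_0|^2-h(t-t_0)
\]
such that $v\leq P$ on $B_{6\sqrt n}\times(0,t_0]$ with equality at $(x_0,t_0)$. At such a touch point, the fact that $(v-P)$ attains an interior spatial maximum and a one-sided maximum in $t$ from the past yields
\[
D^2v(x_0,t_0)\leq h\,\mathrm{I},\qquad v_t(x_0,t_0)\geq -h,
\]
so $\overline{G}_h\subset\{D^2v^+\leq h\,\mathrm{I}\}\cap\{v_t^-\leq h\}$. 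The parabolic decay estimate of Wang then gives constants $\mu_1,C_3>0$ with $|\overline{A}_h|\leq C_3h^{-\mu_1}$, whence
\[
\bigl|\{D^2v^+>h\,\mathrm{I}\}\bigr|+\bigl|\{v_t^->h\}\bigr|\leq 2C_3 h^{-\mu_1}.
\]
Integrating in $h$ for any $\delta_1\in(0,\mu_1)$ produces $L_{\delta_1}$ bounds on both $D^2v^+$ and $v_t^-$.

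The remaining pieces $D^2v^-$ and $v_t^+$ are then recovered pointwise from the subsolution inequality, exactly as in the elliptic proof. Writing
\[
v_t-\Lambda\Tr(D^2v^+)+\lambda\Tr(D^2v^-)\leq f,
\]
and using $-v_t\leq v_t^-$, one obtains
\[
\lambda\Tr(D^2v^-)\leq \Lambda\Tr(D^2v^+)+v_t^-+f,\qquad v_t^+\leq \Lambda\Tr(D^2v^+)+f^+,
\]
so $\|D^2v^-\|_{L_{\delta_1}}$ and $\|v_t^+\|_{L_{\delta_1}}$ are controlled by $\|D^2v^+\|_{L_{\delta_1}}+\|v_t^-\|_{L_{\delta_1}}+\|f\|_{L_{\delta_1}}$. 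Combining everything yields \eqref{e2}.

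The principal technical obstacle is ensuring that the parabolic paraboloid argument simultaneously delivers an upper bound on $D^2v$ and a lower bound on $v_t$ at the touch points; this requires orienting the paraboloid so that $P_t<0$ and touching from the past in time while remaining interior in the spatial ball, and it is precisely what is needed to invoke Wang's parabolic analogue of Proposition~\ref{prop11}. Once that decay estimate is in place, the rest of the argument is parallel to the elliptic proof, with the extra observation that the parabolic Pucci inequality now controls two pairs of quantities rather than one.
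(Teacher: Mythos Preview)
Your proposal is correct and follows essentially the same route as the paper: invoke Wang's parabolic decay estimate (the analogue of Proposition~\ref{prop11}) to bound $\|D^2v^+\|_{L_{\delta_1}}$ and $\|v_t^-\|_{L_{\delta_1}}$, then recover $D^2v^-$ and $v_t^+$ pointwise from the subsolution inequality $v_t-\Lambda\Tr(D^2v^+)+\lambda\Tr(D^2v^-)\le f$. The paper records the last step as the single inequality $v_t^++\lambda\Tr(D^2v^-)\le \Lambda\Tr(D^2v^+)+v_t^-+f$, whereas you split it into two, but the content is identical.
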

\begin{proof}
    We basically use the same idea {as in the proof of} Theorem \ref{t2}.
    The parabolic version of Proposition \ref{prop11} {was} obtained by Wang \cite[Theorem 4.11]{W1}. {See also \cite{K10}.}
    Especially, there exist $m_1,\mu_1\in (0,1)$ and $C_3>0$ such that assuming without loss of generality that $\|u\|_{L_\infty(Q_{1})}\leq 1$ and $\|f\|_{L_{n+1}(Q_{1})}\leq m_1$,
    \[
    \left|Q_{1/2}\cap  (\{D^2v^+>hI\}\cup\{v_t<-h\})\right|\leq C_3h^{-\mu_1}
    \]
    for $h>0$.
    Hence
    \[
    \|v^-_t\|_{L_\delta(Q_{1/2})}+\|D^2v^+\|_{L_\delta(Q_{1/2})}\leq C.
    \]
    We also have
    \[
    \|v^+_t\|_{L_\delta(Q_{1/2})}+\|D^2v^-\|_{L_\delta(Q_{1/2})}\leq C
    \]
    by using
    \[
    v_t^++\lambda\Tr(D^2v^-)\leq \Lambda\Tr(D^2v^+)+v^-_t+f\quad\mbox{in }Q_{1/2}.
    \]
    The theorem is proved.
\end{proof}

\begin{cor}
Let $C_2$ and $\delta_1$ be as in Theorem \ref{t3} and $F$ be concave and elliptic as in Corollary \ref{c2}.
For $f\in W^{2,1}_{n+1}(Q_{1})$, if $v\in W^{4,3}_{n+1}(Q_{1})$ is a solution of
\begin{equation*}
   v_t- F(D^2v)=f\quad\mbox{a.e. in }Q_{1},
\end{equation*}
then
\begin{align*}
    &\|v_{tee}\|_{L_{\delta_1}(Q_{1/2})}+\|D^2v_{ee}\|_{L_{\delta_1}(Q_{1/2})}\leq C_2(\|f_{ee}\|_{L_{n+1}(Q_{1})}+\|u_{ee}\|_{L_\infty(Q_{1})})
\end{align*}
for $e\in \partial B_1$ and
\textcolor{black}{
\begin{align*}
    \|v_{ttt}\|_{L_{\delta_1}(Q_{1/2})}+\|D^2v_{tt}\|_{L_{\delta_1}(Q_{1/2})}
    \leq C_2(\|f_{tt}\|_{L_{n+1}(Q_{1})}+\|u_{tt}\|_{L_\infty(Q_{1})}).
\end{align*}
}
\end{cor}
\begin{proof}
By a similar way with Corollary \ref{c2}, we have
\[
v_{tee}-\P^{+}(D^2v_{ee})\leq f_{ee}\quad\mbox{in }Q_{1}
\]
for $e\in \partial B_1$ and
\textcolor{black}{
\[
v_{ttt}-\P^{+}(D^2v_{tt})\leq f_{tt}\quad\mbox{in }Q_{1}.
\]
}
We obtain the assertion by applying Theorem \ref{t3}.
\end{proof}

\section{\texorpdfstring{$W^{2,1}_1$}{1} estimates and Structure of Critical Exponents}
\subsection{\texorpdfstring{$W^{2,1}_1$}{1} estimates} Here we provide the $W^{2,1}_1$ estimate for subsolutions of the parabolic Pucci equations.
\textcolor{black}{In what follows, we denote $K:=B_{1/2}\times (-2/3,-1/3)$.}
\begin{thm}\label{t1}
Suppose that $\lambda<\Lambda$.
There exists $C_3=C_3(n,\lambda,\Lambda)>0$ such that for $f\in L_1(Q_{1})$, if $v\in W^{2,1}_1(Q_{1})$ satisfies
\begin{equation*}
 v_t-\P^-(D^2 v)\leq f\quad\mbox{a.e. in }Q_{1},
\end{equation*}
then
\begin{equation}\label{e1}
    \|v_t\|_{L_1(K)}+\|D^2v\|_{L_1(K)}\leq C_3(\|f\|_{L_1(Q_{1})}+\|v\|_{L_1(Q_{1})}).
\end{equation}
\end{thm}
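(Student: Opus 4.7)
The starting observation is the algebraic identity
\[
\P^-(M) \;=\; \frac{\lambda+\Lambda}{2}\,\Tr(M) \;-\; \frac{\Lambda-\lambda}{2}\,|M|,
\]
which one checks by writing $M = M^+ - M^-$, so that $\Tr(M) = \Tr(M^+) - \Tr(M^-)$ and $|M| = \Tr(M^+) + \Tr(M^-)$. Substituting into the pointwise a.e.\ inequality $v_t - \P^-(D^2 v) \le f$ converts it into
\[
v_t \;+\; \frac{\Lambda-\lambda}{2}\,|D^2 v| \;\le\; f \;+\; \frac{\lambda+\Lambda}{2}\,\Delta v \qquad \text{a.e.\ in } B_1\times(0,1).
\]
The essential feature, and the reason the assumption $\lambda<\Lambda$ is imposed, is that the coefficient of $|D^2 v|$ on the left is \emph{strictly positive}, while the right-hand side is linear in $v$.

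Next I would fix a cutoff $\eta \in C^\infty_c(B_1\times(0,1))$ with $\eta \equiv 1$ on $B_{1/2}\times(1/3,2/3)$, $0 \le \eta \le 1$, and $|\eta_t| + |D^2\eta|$ uniformly bounded. Multiplying the displayed inequality by $\eta \ge 0$ and integrating over $B_1\times(0,1)$, integration by parts (which is valid since $v, Dv, D^2 v, v_t$ all lie in $L_1$) gives $\int \eta\, v_t = -\int \eta_t\, v$ and $\int \eta\,\Delta v = \int v\,\Delta\eta$, so
\[
\frac{\Lambda-\lambda}{2}\int \eta\,|D^2 v| \;\le\; \int \eta\,|f| \;+\; \int |v|\Bigl(|\eta_t| + \tfrac{\lambda+\Lambda}{2}|\Delta\eta|\Bigr).
\]
Dividing by $(\Lambda-\lambda)/2$ and restricting the integration on the left to the set $\{\eta\equiv 1\}$ yields the bound on $\|D^2v\|_{L_1(B_{1/2}\times(1/3,2/3))}$.

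To recover the $v_t$ part of \eqref{e1}, I use that $\Tr(D^2 v^-)\ge 0$ implies $\P^-(D^2 v) \le \lambda\,\Tr(D^2 v^+) \le \lambda\,|D^2 v|$, so the hypothesis forces the pointwise bound $v_t \le f + \lambda\,|D^2 v|$, and hence $v_t^+ \le f^+ + \lambda\,|D^2 v|$. Integrating against $\eta$ controls $\int \eta\, v_t^+$ using the $L_1$ estimate on $|D^2 v|$ just established. To pick up $v_t^-$ I apply the elementary identity
\[
\int \eta\, v_t^- \;=\; \int \eta\, v_t^+ \;-\; \int \eta\, v_t \;=\; \int \eta\, v_t^+ \;+\; \int \eta_t\, v,
\]
which bounds $\int \eta\,v_t^-$ by the previous quantity plus $\|v\|_{L_1(B_1\times(0,1))}$. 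Summing gives the desired control on $\|v_t\|_{L_1(B_{1/2}\times(1/3,2/3))}$.

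I do not foresee a serious analytic obstacle: once the algebraic rewriting is in hand, the argument is essentially a one-line integration-by-parts estimate, and no covering, barrier, or nonlinear potential-theoretic tool is needed. The only delicate point is justifying the integration by parts for a function with only $L_1$ second derivatives, which is standard because the cutoff $\eta$ is smooth and compactly supported. The hypothesis $\lambda<\Lambda$ enters exactly once—when dividing by $(\Lambda-\lambda)/2$—consistent with the fact that for $\lambda=\Lambda$ (the heat operator) no $W^{2,1}_1$ estimate is available.
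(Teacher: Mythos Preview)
Your proof is correct and follows essentially the same approach as the paper: both rest on the algebraic identity $\P^-(M)=\frac{\lambda+\Lambda}{2}\Tr(M)-\frac{\Lambda-\lambda}{2}|M|$, then multiply by a smooth cutoff and integrate by parts. The paper packages the integration-by-parts step as a separate lemma (Lemma~\ref{l1}, bounding $\|v_t-\tfrac{\Lambda+\lambda}{2}\Delta v\|_{L_1}$ via duality against test functions $g\in L_\infty$) and then recovers $\|v_t\|_{L_1}$ by the triangle inequality, whereas you bound $\int\eta|D^2v|$ directly and handle $v_t^\pm$ separately; these are minor organizational variations of the same argument.
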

The $W^2_1$ estimate for the elliptic Pucci equations follows from Theorem \ref{t1}.
\begin{cor}\label{c1}
Suppose that $\lambda<\Lambda$.
There exists $C_4=C_4(n,\lambda,\Lambda)>0$ such that \textcolor{black}{for $f\in L_1(B_1)$,} if $u\in W^{2}_{1}(B_1)$ satisfies
\begin{equation*}
 -\P^-(D^2u)\leq f\quad\mbox{a.e. in }B_1,
\end{equation*}
then
\begin{equation}\label{e3}
    \|D^2u\|_{L_1(B_{1/2})}\leq C_4(\|f\|_{L_1(B_1)}+\|u\|_{L_1(B_1)}).
\end{equation}
\end{cor}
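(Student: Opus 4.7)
The plan is to deduce the elliptic estimate from the parabolic estimate in Theorem \ref{t1} via the standard trick of extending $u$ to be constant in time. Specifically, given $u \in W^2_1(B_1)$ satisfying $-\P^-(D^2 u) \leq f$ a.e.\ in $B_1$, I define $v(x,t) := u(x)$ on $B_1 \times (0,1)$ and treat $f$ as a time-independent function on $B_1 \times (0,1)$. Then $v_t = 0$ a.e., $D^2 v = D^2 u$, so $v \in W^{2,1}_1(B_1 \times (0,1))$ and
\[
v_t - \P^-(D^2 v) = -\P^-(D^2 u) \leq f \quad \text{a.e.\ in } B_1 \times (0,1).
\]

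Next I apply Theorem \ref{t1} to $v$ to obtain
\[
\|v_t\|_{L_1(B_{1/2} \times (1/3,2/3))} + \|D^2 v\|_{L_1(B_{1/2} \times (1/3,2/3))} \leq C_3 \bigl( \|f\|_{L_1(B_1 \times (0,1))} + \|v\|_{L_1(B_1 \times (0,1))} \bigr).
\]
Since $v$ and $f$ are independent of $t$, Fubini gives $\|D^2 v\|_{L_1(B_{1/2} \times (1/3,2/3))} = \tfrac{1}{3} \|D^2 u\|_{L_1(B_{1/2})}$, $\|f\|_{L_1(B_1 \times (0,1))} = \|f\|_{L_1(B_1)}$, and $\|v\|_{L_1(B_1 \times (0,1))} = \|u\|_{L_1(B_1)}$. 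Rearranging yields \eqref{e3} with $C_4 = 3C_3$.

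There is essentially no obstacle here; the only point that requires a moment of care is verifying that the a.e.\ subsolution property in the elliptic sense passes to the a.e.\ subsolution property for the parabolic equation applied to $v(x,t) = u(x)$, but this is immediate from $v_t \equiv 0$ and the fact that the set of good points for $u$ in $B_1$ lifts to a full-measure set in $B_1 \times (0,1)$. Note that the hypothesis $\lambda < \Lambda$ is inherited from Theorem \ref{t1} and is needed there; the reduction itself does not require it.
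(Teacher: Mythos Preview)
Your proof is correct and takes exactly the same approach as the paper: set $v(x,t):=u(x)$ and apply Theorem~\ref{t1}. The paper states this in one line, while you spell out the Fubini computation and the constant $C_4=3C_3$, but there is no substantive difference.
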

\begin{proof}
    This immediately follows by applying Theorem \ref{t1} to $v(x,t):=u(x)$ in $Q_{1}$.
\end{proof}

\begin{rem}
In Theorem \ref{t1} and Corollary \ref{c1}, the assumption $\lambda<\Lambda$ is necessary.
We provide counterexamples of the estimates \eqref{e1} and \eqref{e3} for $\lambda=\Lambda$ in Proposition \ref{P1}.
\end{rem}
\textcolor{black}{
\begin{rem}
    One may wonder what if $f\in C(Q_1)$ and $v\in C(Q_1)$ is only a viscosity subsolution in Theorem \ref{t1}.
    In this case, from the convexity of the equation, it follows that
    \[
    v_t^\delta-\P^-(D^2v^\delta)\leq f^\delta\quad Q_{4/5},
    \]
    where $v^\delta=v*\eta^\delta$ is a mollification of $v$, with a mollifier $\eta$ in $\R^{n+1}$.
    We may apply Theorem \ref{t1} to $v^\delta$.
    By sending $\delta\to 0$, we obtain the Hessian estimate in the space of Radon measures $\M$:
    \[
    \|v_t\|_{\M(K)}+\|D^2v\|_{\M(K)}\leq C_3(\|f\|_{L_1(Q_{1})}+\|v\|_{L_1(Q_{1})}).
    \]
\end{rem}
}

\begin{lem}\label{l1}
There is a constant $C_5=C_5(n,\lambda,\Lambda)>0$ such that if $v\in W^{2,1}_1(Q_{1})$ satisfies
\begin{equation*}
    v_t-\frac{\Lambda+\lambda}{2}\Delta v\leq f\quad\mbox{a.e. in }Q_{1},
\end{equation*}
then
\begin{equation}\label{e4}
   \left\|v_t-\frac{\Lambda+\lambda}{2}\Delta v\right\|_{L_1(K)} \leq C_5(\|f\|_{L_1(Q_{1})}+\|u\|_{L_1(Q_{1})}).
\end{equation}
\end{lem}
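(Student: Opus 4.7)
\medskip

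The plan is to exploit the fact that the one-sided differential inequality $v_t - \frac{\Lambda+\lambda}{2}\Delta v \leq f$ already controls the positive part of the left-hand side pointwise, so the only work is to estimate the negative part in an averaged (integrated against a cutoff) sense. Set
\[
g := v_t - \frac{\Lambda+\lambda}{2}\Delta v,
\]
so that $g \in L_1(B_1\times(0,1))$ since $v \in W^{2,1}_1$, and by hypothesis $g \leq f$ a.e., which gives the pointwise bound $g^+ \leq f^+$. The key identity is the trivial algebraic observation
\[
|g| = 2g^+ - g,
\]
which splits the $L_1$ norm of $g$ into a piece controlled by $f$ and a signed integral of $g$ itself.

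Choose a nonnegative cutoff $\eta \in C_c^\infty(B_1\times(0,1))$ with $\eta \equiv 1$ on $B_{1/2}\times(1/3,2/3)$. Then
\[
\int_{B_{1/2}\times(1/3,2/3)} |g| \,dx\,dt \;\leq\; \int \eta |g| \,dx\,dt \;=\; 2\int \eta g^+\,dx\,dt - \int \eta g\,dx\,dt \;\leq\; 2\|f\|_{L_1(B_1\times(0,1))} + \left|\int \eta g\,dx\,dt\right|.
\]
The first term is already in the desired form. For the second term I would use that $\eta$ has compact support in $B_1\times(0,1)$ to integrate by parts in both the time and the space variables:
\[
\int \eta g \,dx\,dt = \int \eta v_t \,dx\,dt - \frac{\Lambda+\lambda}{2}\int \eta \Delta v\,dx\,dt = -\int \eta_t\, v \,dx\,dt - \frac{\Lambda+\lambda}{2}\int \Delta \eta \cdot v\,dx\,dt,
\]
which yields $|\int \eta g| \leq C(\|\eta_t\|_{L_\infty}+\|\Delta\eta\|_{L_\infty})\,\|v\|_{L_1(B_1\times(0,1))}$. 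Combining gives \eqref{e4}.

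The proof is essentially immediate once the right decomposition is chosen; the only nontrivial point is justifying the integration-by-parts identities for $v \in W^{2,1}_1$. This is standard: since $v_t,\,\Delta v \in L_1(B_1\times(0,1))$ and $\eta \in C_c^\infty(B_1\times(0,1))$, one approximates $v$ by smooth functions in the $W^{2,1}_1$ norm (for example by mollification combined with cutoffs supported in the slightly larger set) and passes to the limit, the boundary contributions vanishing at every step because $\eta$ vanishes near $\partial_p(B_1\times(0,1))$. So there is no real obstacle; the proof is a short computation.
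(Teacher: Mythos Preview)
Your proof is correct and follows essentially the same approach as the paper: both arguments choose the same cutoff $\eta$, use the one-sided inequality $v_t-\frac{\Lambda+\lambda}{2}\Delta v\le f$ against a nonnegative test function to control one piece, and then integrate by parts against $\eta$ to bound the signed integral $\int\eta(v_t-\frac{\Lambda+\lambda}{2}\Delta v)$ by $C\|v\|_{L_1}$. The only cosmetic difference is that the paper packages this as a duality bound (testing against $g+\|g\|_\infty\eta$ for arbitrary bounded $g$), whereas you use the pointwise decomposition $|g|=2g^+-g$ directly; the underlying mechanism is identical.
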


\begin{proof}
Let $\eta$ be a cutoff function such that $\eta\equiv1$ in $K$ and is compactly supported in $Q_{1}$.
For any $g\in L_\infty(Q_{1})$ with support contained in $K$, it follows that
\begin{equation*}
    \int_{Q_{1}}\left(v_t-\frac{\Lambda+\lambda}{2}\Delta v-f\right)(g+\|g\|_\infty\eta)dxdt\leq0
\end{equation*}
since $v_t-(\Lambda+\lambda)\Delta v/2-f\leq 0$ and $g+\|g\|_\infty\eta\geq0$ in $Q_1$.
\textcolor{black}{
Therefore, using the integration by parts, it follows that
\begin{align*}
    &\quad\int_{Q_{1}}\left(v_t-\frac{\Lambda+\lambda}{2}\Delta v\right)gdxdt\\
    &\leq \int_{Q_{1}}\left(v\|g\|_\infty\left(\eta_t+\frac{\Lambda+\lambda}{2}\Delta \eta\right)+f\left(g+\|g\|_\infty\eta\right)\right)dxdt.
\end{align*}
}
Hence we have
\begin{equation*}
    \int_{Q_{1}}\left(v_t-\frac{\Lambda+\lambda}{2}\Delta v\right)g\leq C\|g\|_{L_\infty(Q_{1})} (\|f\|_{L_1(Q_{1})}+\|v\|_{L_1(Q_{1})}).
\end{equation*}
By replacing $g$ with $-g$, we also have
\begin{equation*}
\int_{Q_{1}}\left(v_t-\frac{\Lambda+\lambda}{2}\Delta v\right)g\geq -C\|g\|_{L_\infty(Q_{1})} (\|f\|_{L_1(Q_{1})}+\|v\|_{L_1(Q_{1})}).
\end{equation*}
Hence \eqref{e4} follows.
\end{proof}

\begin{proof}[Proof of Theorem \ref{t1}]
Using
\begin{align}
                \label{eq3.44}
    \P^-(D^2v)=\frac{\Lambda+\lambda}{2}\Delta v-\frac{\Lambda-\lambda}{2}\Tr(D^2v^++D^2v^-)\leq \frac{\Lambda+\lambda}{2}\Delta v,
\end{align}
we obtain
\begin{equation*}
    f\geq v_t-\P^-(D^2v)\geq v_t-\frac{\Lambda+\lambda}{2}\Delta v\quad\mbox{a.e. in }Q_{1}.
\end{equation*}
We compute
\begin{align*}
    \frac{\Lambda-\lambda}{2}|D^2v|
    &=\frac{\Lambda-\lambda}{2}\Tr(D^2v^++D^2v^-)\\
    &=\frac{\Lambda+\lambda}{2}\Delta v-\P^-(D^2v)
    \leq f-v_t+\frac{\Lambda+\lambda}{2}\Delta v.
\end{align*}
Hence by Lemma \ref{l1}
\begin{align*}
    \|D^2v\|_{L_1(K)}
    &\leq \frac{C}{\Lambda-\lambda}\left(\|f\|_{L_1(K)}+\left\|v_t-\frac{\Lambda+\lambda}{2}\Delta v\right\|_{L_1(K)}\right)\\
    &\leq \frac{C}{\Lambda-\lambda}(\|f\|_{L_1(Q_{1})}+\|v\|_{L_1(Q_{1})}).
\end{align*}
We also obtain
\begin{align*}
    \|v_t\|_{L_1(K)}
    &\leq C\left(\|\Delta v\|_{L_1(K)}+\left\|v_t-\frac{\Lambda+\lambda}{2}\Delta u\right\|_{L_1(K)}\right)\\
    &\leq \frac{C}{\Lambda-\lambda}(\|f\|_{L_1(Q_{1})}+\|v\|_{L_1(Q_{1})}).
\end{align*}
The theorem is proved.
\end{proof}

\subsection{Some Properties of Critical Exponents}
We next study the critical exponents of the $W^2_p$ $(W^{2,1}_p)$ estimates and the generalized maximum principle.
The next proposition is a modification of Pucci \cite{P}.
\begin{prop}\label{P1}
    For $p_n=n\Lambda/(\lambda (n-1)+\Lambda)$, it follows that
    \begin{align}\label{p1}
        \sup_{U\in W^{2}_{p_n}(B_1)}\left\{\frac{\|U\|_{L_\infty(B_1)}-\|U\|_{L_\infty(\partial B_1)}}{\|\P^-(D^2U)\|_{L_{p_n}(B_1)}}\right\}&=\infty.
    \end{align}
    For $k=2,3,...,n$ and $p_k=k\Lambda/(\lambda (k-1)+\Lambda)$, the following properties hold:
    \begin{align}
        \sup_{U\in W^{2}_{p_k}(B_1)}\left\{\frac{\|D^2U\|_{L_{p_k}(B_{1/2})}}{\|\P^-(D^2U)\|_{L_{p_k}(B_1)}+\|U\|_{L_{p_k}(B_1)}}\right\}&=\infty\label{p2}\quad \mbox{and }\\
        \sup_{V\in W^{2,1}_{p_k}(B_1\times(0,1) )}\left\{\frac{\|V_t\|_{L_{p_k}(Q_{1/2})}+\|D^2V\|_{L_{p_k}(Q_{1/2})}}{\|(V_t-\P^-(D^2V))\|_{L_{p_k}(Q_{1})}+\|V\|_{L_{p_k}(Q_{1} )}}\right\}&=\infty.\label{p3}
    \end{align}
\end{prop}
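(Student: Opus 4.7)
The plan is to follow Pucci's original constructions \cite{P}, building for each of the three claims a family of counterexamples parameterized by a small regularization $\e > 0$ and showing that the ratio in question becomes unbounded. The common building block is the homogeneous function
\[
u(x) = |x'|^{s}, \qquad x' = (x_1,\ldots,x_k) \in \R^k, \qquad s = \frac{\Lambda - \lambda(k-1)}{\Lambda}.
\]
A direct Hessian computation shows that $D^2 u$ has one radial eigenvalue $s(s-1)|x'|^{s-2}$ (negative when $s\in (0,1)$), $k-1$ tangential eigenvalues $s|x'|^{s-2}$ (positive), and $n-k$ zero eigenvalues. In the regime $\lambda(k-1)<\Lambda$ one has $s\in(0,1)$, and the choice of $s$ forces $\lambda(k-1)=\Lambda(1-s)$, so that $\P^-(D^2 u)\equiv 0$ on $\R^n\setminus\{x'=0\}$. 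Moreover $|D^2 u|\sim |x'|^{s-2}$ is borderline non-integrable in $L_{p_k}$ because the critical relation $(2-s)p_k=k$ is exactly attained.

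For \eqref{p2} I would introduce the smooth regularization $u_\e(x) := (|x'|^2+\e^2)^{s/2}$ and split all integrals into the regions $\{|x'|\leq\e\}$ and $\{|x'|>\e\}$. Using the explicit formulas for $D^2 u_\e$, I would verify three things: $\|u_\e\|_{L_{p_k}(B_1)} = O(1)$, since $u_\e$ is uniformly bounded; $\|\P^-(D^2 u_\e)\|_{L_{p_k}(B_1)} = O(1)$, because on $\{|x'|>\e\}$ the Pucci operator reduces to a regularization remainder of size $\e^{2}(|x'|^2+\e^2)^{s/2-2}$, whose $L_{p_k}$-norm is $O(1)$, while on $\{|x'|\leq\e\}$ the eigenvalues are $O(\e^{s-2})$ over an $x'$-volume $O(\e^k)$; and $\|D^2 u_\e\|_{L_{p_k}(B_{1/2})}^{p_k}$ equals, up to constants, $\int_\e^1 r^{-1}\,dr = \log(1/\e)$, which blows up. Sending $\e\to 0$ yields \eqref{p2}.

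For \eqref{p3} I would use the time-independent extension $v_\e(x,t):=u_\e(x)$; since $v_t\equiv 0$, the parabolic Pucci operator equals $-\P^-(D^2 u_\e)$, and all the spatial bounds extend directly to the parabolic cylinder after integrating in a bounded time interval. For \eqref{p1} (with $k=n$) I would take $U_\e(x) := u_\e(x) - (1+\e^2)^{s/2}$, so that $U_\e|_{\partial B_1} = 0$ while $\|U_\e\|_{L_\infty(B_1)}\to 1$, giving a uniformly positive interior/boundary $L_\infty$-gap; by a further rescaling $U^\eta_\e(x) := U_\e(x/\eta)\,\zeta(x/\eta)$ concentrating the singularity in a small sub-ball $B_\eta$, the norm $\|\P^-(D^2 U^\eta_\e)\|_{L_{p_n}}$ scales as $\eta^{n/p_n-2}$ while the gap stays $O(1)$, driving the ratio to infinity in the appropriate parameter regime.

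The main obstacle is the precise $O(1)$ bound on $\|\P^-(D^2 u_\e)\|_{L_{p_k}}$: the cancellation of radial and tangential eigenvalues at the Pucci exponent turns what would otherwise be a borderline-divergent integral into a borderline-convergent one, and one must carefully track the sign of the radial eigenvalue (which changes as the regularization scale is crossed) to apply the correct half of the Pucci operator. This single computation is exactly the ``spirit of Pucci'' and is what distinguishes critical from non-critical exponents in each of the three estimates.
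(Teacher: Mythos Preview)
Your regularization approach for \eqref{p2} and \eqref{p3} is sound and genuinely different from the paper's: the paper perturbs the \emph{exponent}, using $U_\theta(x)=|x|^\theta-1$ with $\theta\downarrow s:=1-\lambda(k-1)/\Lambda$, whereas you fix the critical exponent $s$ and smooth the singularity via $u_\e=(|x'|^2+\e^2)^{s/2}$. Both produce a blowing-up $\|D^2\cdot\|_{L_{p_k}}$ against a bounded denominator. One caveat: you only treat the regime $s\in(0,1)$, i.e.\ $\lambda(k-1)<\Lambda$. When $s\le 0$ the sign pattern of the eigenvalues of $D^2u_\e$ flips, and the cancellation in $\P^-$ no longer occurs for $u_\e$ itself (it occurs for $-u_\e$). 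The paper handles this by switching to $1-|x|^\theta$ for $\theta<0$, and it also treats the borderline $s=0$ separately via Beta-function asymptotics; you would need an analogous sign flip and a separate argument at $s=0$.

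The serious gap is in your argument for \eqref{p1}. With your $U_\e$ one computes, exactly as you say, that $\|\P^-(D^2U_\e)\|_{L_{p_n}(B_1)}$ is $O(1)$; but it is \emph{not} $o(1)$---it converges to a positive constant as $\e\to0$, because the critical balance $(2-s)p_n=n$ makes the $\e$-prefactor disappear. So the raw ratio stays bounded. Your proposed rescaling $U^\eta_\e(x)=U_\e(x/\eta)\zeta(x/\eta)$ does not rescue this: the Pucci norm scales as $\eta^{n/p_n-2}=\eta^{-s}$, which for $s\in(0,1)$ \emph{grows} as $\eta\to0$. Taking $\eta\to\infty$ instead makes the function nearly constant on $B_1$, so the $L_\infty$ gap between interior and boundary collapses to $0$ at the same rate. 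In short, the ratio in \eqref{p1} is invariant under this scaling and cannot be driven to infinity that way. The paper's mechanism is different and essential here: for $U_\theta$ one has $\P^-(D^2U_\theta)=|\theta|\,\Lambda(\theta-s)\,|x|^{\theta-2}$, and the $L_{p_n}$ norm of $|x|^{\theta-2}$ carries a factor $(n-p_n(2-\theta))^{-1/p_n}\sim(\theta-s)^{-1/p_n}$, so altogether $\|\P^-(D^2U_\theta)\|_{L_{p_n}}\sim(\theta-s)^{1-1/p_n}\to0$ (using $p_n>1$), while the $L_\infty$ gap stays $\sim1$. That vanishing of the denominator is what your regularization cannot reproduce; you need the exponent perturbation (or an equivalent idea) for \eqref{p1}.
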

\begin{proof}
    We first prove the assertion in the case $k=n$.
    For $\theta\in\R$ such that
    \begin{equation}\label{p6}
    {2-n}\leq 1-\lambda(n-1)/\Lambda<\theta<1,
    \end{equation}
    set
    \[
    U_\theta(x):=
    \left\{
        \begin{array}{cc}
        |x|^{\theta}-1     & \text{if }0\leq\theta<1\quad\text{and} \\
        1-|x|^\theta   & \text{if }\theta<0.
        \end{array}
    \right.
    \]
    In order to prove \eqref{p1} and \eqref{p2}, it is enough to see
    \begin{align}
        \frac{\|U_\theta\|_{L_\infty(B_1)}-\|U_\theta\|_{L_\infty(\partial B_1)}}{\|\P^-(D^2U_\theta)\|_{L_{p_n}(B_1)}}&\to\infty\quad\mbox{and}\label{p4}\\
        \frac{\|D^2U_\theta\|_{L_{p_n}(B_{1/2})}}{\|\P^-(D^2U_\theta)\|_{L_{p_n}(B_1)}+\|U_\theta\|_{L_{p_n}(B_1)}}&\to\infty\quad\mbox{as }\theta\to 1-\lambda(n-1)/\Lambda.\label{p5}
    \end{align}
    Noting $D^2U_\theta\in L_{p_n}(B_1)$ from \eqref{p6},
    we compute
    \[
    D^2U_\theta(x)=
    \left\{
    \begin{array}{cc}
    \frac{\theta}{|x|^{2-\theta}}\left((\theta-1)\frac{x}{|x|}\otimes\frac{x}{|x|}+\left(I-\frac{x}{|x|}\otimes\frac{x}{|x|}\right)\right)&\text{if }0\leq\theta<1,\\
    -\frac{\theta}{|x|^{2-\theta}}\left((\theta-1)\frac{x}{|x|}\otimes\frac{x}{|x|}+\left(I-\frac{x}{|x|}\otimes\frac{x}{|x|}\right)\right)&\text{if }{2-n<}\theta< 0.
    \end{array}
    \right.
    \]
    Therefore, for all $\theta<1$, it follows that
    \[
    D^2U_\theta(x)^+=\frac{|\theta|}{|x|^{2-\theta}}\left(I-\frac{x}{|x|}\otimes\frac{x}{|x|}\right),\quad D^2U_\theta(x)^-=\frac{|\theta|(1-\theta)}{|x|^{2-\theta}}\left(\frac{x}{|x|}\otimes\frac{x}{|x|}\right)
    \]
    and
    \[
    \P^-(D^2U_\theta)=
    |\theta|(\lambda(n-1)-\Lambda(1-\theta))|x|^{-2+\theta}.
    \]
    Hence it follows that
    \begin{align*}
        \|\P^-(D^2U_\theta)\|_{L_{p_n}(B_1)}
        &=|\theta|(\lambda(n-1)-\Lambda(1-\theta))\left(|\partial B_1|\int_{0}^1r^{n-1-{p_n}(2-\theta)}dr\right)^{1/{p_n}}\\
        &=|\theta|(\lambda(n-1)-\Lambda(1-\theta))\left(\frac{|\partial B_1|}{n-{p_n}(2-\theta)}\right)^{1/{p_n}}\\
        &=|\theta|(\lambda(n-1)-\Lambda(1-\theta))^{1-1/{p_n}}\left(\frac{|\partial B_1|(\lambda(n-1)+\Lambda)}{n}\right)^{1/{p_n}},
    \end{align*}
{    where we used $n-p_n(2-\theta)>0$.}
    We also obtain
    \[
    \|U_\theta\|_{L_\infty(B_1)}-\|U_\theta\|_{L_\infty(\partial B_1)}
    =\left\{
    \begin{array}{cc}
        1 &  \text{if }0<\theta<1,\\
        \infty & \text{if }{2-n<}\theta<0
    \end{array}
    \right.
    \]
    Hence if $\theta<0$, \eqref{p4} immediately follows.
    If $\theta>0$,
    \[
    \frac{\|U_\theta\|_{L_\infty(B_1)}-\|U_\theta\|_{L_\infty(\partial B_1)}}{\|\P^-(D^2U_\theta)\|_{L_{p_n}(B_1)}}\asymp(\lambda(n-1)-\Lambda(1-\theta))^{-1+1/{p_n}}\to\infty\quad\mbox{as }\theta\to 1-\frac{\lambda(n-1)}{\Lambda}.
    \]
    To prove \eqref{p5}, we compute
    \begin{align*}
        \|U_\theta\|_{L_{p_n}(B_1)}
        &\textcolor{black}{=\left(|\partial B_1|\int_0^1r^{n-1}|1-r^{\theta}|^{p_n}dr\right)^{1/{p_n}}}\\
        &=\left(|\partial B_1|\int_0^1r^{n-1-{p_n}\theta^{-}}(1-r^{|\theta|})^{p_n}dr\right)^{1/{p_n}}\\
        &=\left(\frac{|\partial B_1|}{|\theta|}\int_0^1s^{(n-|\theta|-{p_n}\theta^{-})/|\theta|}(1-s)^{p_n}ds\right)^{1/{p_n}}\\
        &=\left(\frac{|\partial B_1|}{|\theta|}B\left(\frac{n-p_n\theta^-}{|\theta|},{p_n}+1\right)\right)^{1/{p_n}}
    \end{align*}
    and
    \begin{align*}
        \|D^2U_\theta\|_{L_{p_n}(B_{1/2})}
        &=|\theta|(n-\theta)\left(|\partial B_1|\int_{0}^{1/2}r^{n-1-{p_n}(2-\theta)}\right)^{1/{p_n}}\\
        &=|\theta|(n-\theta)\left(\frac{|\partial B_1|(\lambda(n-1)+\Lambda)}{n2^{n-p_n(2-\theta)}(\lambda(n-1)-\Lambda(1-\theta))}\right)^{1/{p_n}},
    \end{align*}
    where $B(x,y)$ is the beta function for $x,y>0$.
    In the case that $\theta\to 1-\lambda(n-1)/\Lambda\neq0$, we have
    \[
    \frac{\|D^2U_\theta\|_{L_{p_n}(B_{1/2})}}{\|\P^-(D^2U_\theta)\|_{L_{p_n}(B_1)}+\|U_\theta\|_{L_{p_n}(B_1)}}
    \asymp(\lambda(n-1)-\Lambda(1-\theta))^{-1/{p_n}}
    \to\infty.
    \]
    In the case that $1-\lambda(n-1)/\Lambda=0$, noting $B(x,y)\asymp x^{-y}$ as $x\to\infty$, we have
    \[
    \frac{\|D^2U_\theta\|_{L_{p_n}(B_{1/2})}}{\|\P^-(D^2U_\theta)\|_{L_{p_n}(B_1)}+\|U_\theta\|_{L_{p_n}(B_1)}}
    \asymp\theta^{-1/{p_n}}
    \to\infty\quad\mbox{as }\theta\to0.
    \]
    Finally, \eqref{p3} immediately follows since for $V_\theta(x,t):=U_\theta(x)$, we have
    \begin{align*}
    &\frac{\|(V_\theta)_t\|_{L_{p_n}(Q_{1/2})}+\|D^2V_\theta\|_{L_{p_n}(Q_{1/2})}}{\|(V_\theta)_t-\P^-(D^2V_\theta)\|_{L_{p_n}(Q_{1})}+\|V_\theta\|_{L_{p_n}(Q_{1})}}\\
    &=\frac{C\|D^2U_\theta\|_{L_{p_n}(B_{1/2})}}{\|\P^-(D^2U_\theta)\|_{L_{p_n}(B_1)}+\|U_\theta\|_{L_{p_n}(B_1)}}
 \to\infty.
    \end{align*}

    When $k=2,...,n-1$, it is enough to construct a function in the same way but depending only on the first $k$ variables as follows:
    \[
    U_\theta^k(x)
    =\left\{
        \begin{array}{cl}
        |\tilde{x}|^{\theta}-1     & \text{if }0\leq\theta<1\quad\text{and} \\
        1-|\tilde{x}|^\theta   & \text{if }\theta<0
        \end{array}
    \right.
    \]
    for $x=(x_1,x_2,...,x_n)\in \R^n$ and $\tilde{x}=(x_1,x_2,...,x_k)\in \R^k$.
\end{proof}
\begin{rem}
    Note that $U_\theta$ is a non-trivial solution of the Pucci equation:
    \begin{equation*}
        \left\{
        \begin{split}
            -\P^-(D^2U_\theta)&=0\quad\mbox{in }B_1\setminus\{0\},\\
            U_\theta&=0\quad\mbox{in }\partial B_1
        \end{split}
        \right.
    \end{equation*}
    for $\theta=1-\lambda(n-1)/\Lambda$.
\end{rem}
In view of Proposition \ref{P1}, one natural question arises: Is it only the Pucci exponents $p_k$ where the $W^{2,1}_p$ estimates fails?
Here we answer the question and show that the $W^{2,1}_p$ estimate fails not only the Pucci exponent but also in an interval when $\Lambda/\lambda$ is large enough, in the parabolic setting.
We denote by $p_M\in [n/2+1,\infty)$ the critical exponent of generalized maximum principles for fully nonlinear parabolic equations:
\[
p_M:=\inf\left\{p> \frac{n}{2}+1:\sup_{v\in W^{2,1}_p}\frac{\|v\|_{L_\infty(Q_{1} )}-\|v\|_{L_\infty(\partial_pQ_{1})}}{\|v_t-\P^-(D^2v)\|_{L_p(Q_{1})}}<\infty\right\}.
\]
We also denote by $p_W\in [n/2+1,\infty)$ the critical exponent of $W^{2,1}_p$ estimates:
\[
p_W:=\inf\left\{p> \frac{n}{2}+1:\sup_{v\in W^{2,1}_p}\frac{\|v_t\|_{L_p(Q_{1/2})}+\|D^2v\|_{L_p(Q_{1/2})}}{\|v_t-\P^-(D^2v)\|_{L_p(Q_{1} )}+\|v\|_{L_p(Q_{1})}}<\infty\right\}.
\]
Note that $p_M,p_W\leq n+1-\varepsilon_1$ for some $\varepsilon_1=\varepsilon_1(n,\lambda,\Lambda)>0$ from \cite{CFKS,E,FS}.
\begin{thm}\label{t4}
It follows that $p_M=p_W\geq \max\{p_n,n/2+1\}$.
Especially, if
\[
\frac{\Lambda}{\lambda}>\frac{(n+2)(n-1)}{n-2}
\]
(so that $p_n=n\Lambda/((n-1)\lambda+\Lambda)>n/2+1$), then it follows that
\begin{equation}\label{e471}
    \sup_{v\in W^{2,1}_p}\frac{\|v_t\|_{L_p(Q_{1/2})}+\|D^2v\|_{L_p(Q_{1/2})}}{\|v_t-\P^-(D^2v)\|_{L_p(Q_{1} )}+\|v\|_{L_p(Q_{1})}}=\infty
\end{equation}
for any $n/2+1<p\leq p_n$.
\end{thm}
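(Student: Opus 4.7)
The plan is to establish the equality $p_M = p_W$ and then lower-bound both via an explicit family of counterexamples built from Proposition \ref{P1}.

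For $p_M = p_W$, the direction $p_M \leq p_W$ follows by a standard Sobolev/cutoff argument: the $W^{2,1}_p$ estimate applied to a subsolution with zero parabolic boundary data (after extension to a slightly larger cylinder) yields $\|v\|_{W^{2,1}_p} \leq C(\|f\|_{L_p} + \|v\|_{L_p})$, and the parabolic embedding $W^{2,1}_p \hookrightarrow L_\infty$ (valid for $p > n/2+1$) combined with a scaling/absorption step produces the $L_\infty$ bound defining $p_M$. The reverse $p_W \leq p_M$ is the main technical obstacle: one must show that the ABP bound at level $p$ implies the $W^{2,1}_p$ estimate. The natural approach is by duality, solving an adjoint Pucci equation $w_t - \P^+(D^2 w) = \phi$ with $L_{p'}$ data $\phi$ and zero parabolic boundary data, controlling $w$ in $L_\infty$ via the ABP at level $p$, and using the resulting integral inequality against $v$ to recover $\|D^2 v\|_{L_p} + \|v_t\|_{L_p} \le C(\|v_t - \P^-(D^2 v)\|_{L_p} + \|v\|_{L_p})$.

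For the lower bound, $p_W \geq n/2+1$ is immediate from the definition. To prove $p_W \geq p_n$ and the stronger failure \eqref{e471}, set $\theta_\ast := 1 - \lambda(n-1)/\Lambda$ and $V_\theta(x,t) := U_\theta(x)$ on $B_1 \times (0,1)$, with $U_\theta$ from the proof of Proposition \ref{P1}. The hypothesis $\Lambda/\lambda > (n+2)(n-1)/(n-2)$ is equivalent to $\theta_\ast > 4/(n+2) = 2 - n/(n/2+1)$, and combined with $\theta_\ast = 2 - n/p_n$ this gives $\theta_\ast \geq 2 - n/p$ for every $p \in (n/2+1, p_n]$. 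Thus for $\theta > \theta_\ast$ close enough to $\theta_\ast$, the Hessian $D^2 U_\theta \sim |x|^{\theta-2}$ lies in $L_p(B_1)$, so $V_\theta \in W^{2,1}_p(B_1 \times (0,1))$. Since $(V_\theta)_t \equiv 0$, the ratio in \eqref{e471} reduces, up to a constant time factor, to the elliptic ratio in \eqref{p2}; using the computations from the proof of Proposition \ref{P1}, with $\varepsilon := \theta - \theta_\ast$,
\[
\|\P^-(D^2 U_\theta)\|_{L_p(B_1)} \asymp \varepsilon \cdot (n+(\theta-2)p)^{-1/p} \longrightarrow 0 \quad \text{as } \varepsilon \to 0^+,
\]
whereas $\|D^2 U_\theta\|_{L_p(B_{1/2})}$ and $\|U_\theta\|_{L_p(B_1)}$ remain bounded away from zero. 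Hence the ratio diverges, establishing \eqref{e471} for every $p \in (n/2+1, p_n]$, and in particular $p_W \geq p_n$.

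The main obstacle is the duality construction for $p_W \leq p_M$; it requires the $W^{2,1}_p$-solvability of the adjoint Pucci equation with $L_{p'}$ data and a careful integration-by-parts justification in the viscosity setting. By contrast, the counterexample is entirely explicit, with its integrability bookkeeping hinging on the sharp relation $\theta_\ast > 4/(n+2)$, which is precisely the hypothesis $\Lambda/\lambda > (n+2)(n-1)/(n-2)$.
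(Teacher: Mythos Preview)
Your counterexample argument for \eqref{e471} has a genuine gap when $p<p_n$. You correctly observe that for $\theta$ slightly above $\theta_\ast:=1-\lambda(n-1)/\Lambda$ one has $D^2U_\theta\in L_p(B_1)$, and that $\|\P^-(D^2U_\theta)\|_{L_p(B_1)}\asymp\varepsilon\,(n+(\theta-2)p)^{-1/p}\to0$ as $\varepsilon=\theta-\theta_\ast\to0^+$. But the denominator in \eqref{e471} also contains $\|U_\theta\|_{L_p(B_1)}$, which converges to $\|U_{\theta_\ast}\|_{L_p(B_1)}>0$ (recall $\theta_\ast>0$ under the hypothesis, so $U_{\theta_\ast}$ is bounded and not identically zero). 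Meanwhile, for $p<p_n$ strictly one has $n+(\theta_\ast-2)p>0$, so the numerator $\|D^2U_\theta\|_{L_p(B_{1/2})}$ also converges to a finite limit. The ratio therefore stays bounded as $\theta\to\theta_\ast$; your family witnesses failure only at $p=p_n$ (where the numerator blows up like $\varepsilon^{-1/p}$), which is exactly the content of Proposition~\ref{P1} and nothing more. The point of Theorem~\ref{t4}---that failure occurs on the whole interval $(n/2+1,p_n]$, not just at the Pucci exponent---cannot be reached by the explicit $U_\theta$ alone.

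The paper's route to \eqref{e471} is indirect and uses both halves of $p_M=p_W$ in an essential way: if the $W^{2,1}_p$ estimate held at some $p\in(n/2+1,p_n)$, then (by the implication ``$W^{2,1}_p\Rightarrow$ maximum principle at $p$'') the maximum principle would hold at $p$, and then Escauriaza's result gives the $W^{2,1}_q$ estimate for all $q>p$, in particular at $q=p_n$, contradicting Proposition~\ref{P1}. You also invert the difficulty of the two directions. The direction $p_W\le p_M$ (maximum principle $\Rightarrow$ $W^{2,1}$ estimate) is the one handled by citing Escauriaza; your proposed duality via an ``adjoint Pucci equation'' with $L_{p'}$ data cannot be controlled by ABP at level $p$ (you would need level $p'$), so that sketch does not go through as written. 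Conversely, the direction $p_M\le p_W$ is not just a Sobolev/cutoff triviality: to drop the $\|v\|_{L_p}$ term one compares $v$ with an auxiliary solution $w$ of $w_t-\P^-(D^2w)=-f^-\chi_{B_1\times(0,1)}$ on all of $\R^n\times(0,\infty)$, uses comparison to bound $w$ in $L_\infty$, applies the assumed $W^{2,1}_p$ estimate on rescaled cylinders, and sends the scale to infinity so that the lower-order term vanishes (this is where $p>n/2+1$ enters), finally invoking the embedding $W^{2,1}_p\hookrightarrow L_\infty$.
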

\begin{proof}
    It was proved by Escauriaza \cite{E} (see also Theorem 1.4 in \cite{CFKS}) that for $p>n/2+1$, the generalized maximum principle:
    \[
    \sup_{v\in W^{2,1}_p}\frac{\|v\|_{L_\infty(Q_{1})}-\|v\|_{L_\infty(\partial_pQ_{1/2})}}{\|v_t-\P^-(D^2v)\|_{L_p(Q_{1})}}<\infty
    \]
    implies that the $W^{2,1}_q$ estimate holds for any $q>p$:
    \[
    \sup_{v\in W^{2,1}_q}\frac{\|v_t\|_{L_q(Q_{1/2})}+\|D^2v\|_{L_q(Q_{1/2})}}{\|v_t-\P^-(D^2v)\|_{L_q(Q_{1} )}+\|v\|_{L_q(Q_{1} )}}<\infty.
    \]
    Hence we have $p_W\leq p_M$.

    We also obtain $p_M\geq p_n$ by the same way.
    Indeed, if $p_M<p_n$ holds, this implies the $W^{2,1}_{p_n}$ estimate as {explained} above, which contradicts \eqref{p3}.

    It remains to prove $p_M\leq p_W${, which together with $p_M\ge p_n$ implies that the $W^{2,1}_p$ estimate cannot hold for $p\in (n/2+1,p_n]$, i.e., \eqref{e471}.} 
    To this end, we aim to show that for any $p>n/2+1$, if there exists $C_6>0$ such that the $W^{2,1}_p$ estimate:
    \begin{align}
    \|v_t\|_{L_p(Q_{1/2})}+\|D^2v\|_{L_p(Q_{1/2})}\leq C_6(\|v_t-\P^-(D^2v)\|_{L_p(Q_{1})}+\|v\|_{L_p(Q_{1})})\label{e162}
    \end{align}
    holds for $v\in W^{2,1}_p(Q_{1})$, then there is also a constant $C_7>0$ such that for any $v\in C^{2}(\overline{Q_{1}})$,
    \[
    \|v\|_{L_\infty(Q_{1} )}-\|v\|_{L_\infty(\partial_pQ_{1})}\leq C_7\|v_t-\P^-(D^2v)\|_{L_p(Q_{1})}.
    \]
    Since $C^{2}(\overline{Q_{1}})$ is dense in $W^{2,1}_p(Q_{1})$, this implies
    \[
    \sup_{v\in W^{2,1}_p}\frac{\|v\|_{L_\infty(Q_{1} )}-\|v\|_{L_\infty(\partial_pQ_{1})}}{\|v_t-\P^-(D^2v)\|_{L_p(Q_{1})}}\leq C_7,
    \]
    which completes the proof.

    Fix $v\in C^{\infty}(\overline{Q_{1}})$ and let $f\in C(\overline{Q_{1}})$ be such that
    \[
    f:=v_t-\P^-(D^2v)\quad\mbox{in }Q_{1}.
    \]
    Then we have
    \[
    v_t-\Delta v\leq v_t-\P^-(D^2v)=f\quad\mbox{in }Q_{1},
    \]
    and applying the generalized maximum principle,
    \[
    \sup_{Q_{1}}v-\sup_{\partial_pQ_{1}}v\leq C\|f\|_{L_p(Q_{1} )}.
    \]
    Hence it remains to prove
    \begin{equation}\label{e161}
    -\inf_{Q_{1}}v+\inf_{\partial_pQ_{1}}v\leq C\|f\|_{L_p(Q_{1})}.
    \end{equation}
    To obtain \eqref{e161}, we will show $v-\inf_{\partial_pQ_{1}}v$ is bounded from below by $w\in W^{2,1}_{q, \text{loc}}(\R^n\times [-1,0))$ for $q> n+1-\varepsilon_1$, which is a unique \textcolor{black}{strong} solution of
    \[
    \left\{
    \begin{array}{ll}
    w_t-\P^-(D^2w)=-f^-\chi_{Q_{1}}     &\mbox{in }\R^n\times (-1,0),  \\
    w(x,-1)=0     &\mbox{in }\R^n.
    \end{array}
    \quad
    \right.
    \]
    Note that we can extend $w\equiv0$ in $\R^n\times(-\infty,-1)$, which is still a solution of $w_t-\P^-(D^2w)=-f^-\chi_{Q_{1}}$ in $\R^n\times(-\infty,\textcolor{black}{0})$.
    \textcolor{black}{
    By the comparison principle, it follows that
    \begin{equation}\label{e164}
    -\int_{-\infty}^t\|f(\cdot,s)\chi_{(-1,0)}(s)\|_{L_\infty(B_1)}ds\leq w(x,t)\leq0\quad\mbox{in }\R^n\times (-\infty,0).
    \end{equation}
    We also have
    \begin{equation}\label{e163}
        \inf_{Q_{1}}v-\inf_{\partial_pQ_{1}}v\geq \inf_{Q_{1}}w.
    \end{equation}
    Indeed, by choosing $\bar{w}(x,t)=-\int_{-\infty}^t\|f(\cdot,s)\chi_{(-1,0)}(s)\|_{L_\infty(B_1)}ds$, which satisfies
    \[
    \bar{w}_t-\P^-(D^2\bar{w})=-\|f(\cdot,t)\chi_{(-1,0)}(t)\|_{L_\infty(B_1)}\leq-f^-\chi_{Q_{1}} \quad\mbox{in }\R^n\times (-\infty,0),
    \]
    and by the comparison principle, the first inequality of \eqref{e164} follows.
    Similarly, by choosing $\hat{w}=0$ satisfying
    \[
    \hat{w}_t-\P^-(D^2\hat{w})=0\geq-f^-\chi_{Q_{1}} \quad\mbox{in }\R^n\times (-\infty,0),
    \]
    the second inequality of \eqref{e164} follows.
    }
    By applying \eqref{e162} to $\tilde{w}(x,t):=R^{-2}w(Rx,R^2t)$, we have
    \begin{align*}
    \|\tilde{w}_t\|_{L_p(Q_{1/2})}+\|D^2\tilde{w}\|_{L_p(Q_{1/2})}&=\|w_t\|_{L_p(Q_R)}
    +\|D^2w\|_{L_p(Q_R)} \\
    &\leq C_6(\|f^-\chi_{Q_1}\|_{L_p(Q_R)}+R^{-2+(n+2)/p}\|w\|_{L_\infty(Q_R)}).
    \end{align*}
    By letting $R\to\infty$, it follows that
    \begin{align*}
    \|w_t\|_{L_p(\R^n\times (-\infty,0))}+\|D^2w\|_{L_p(\R^n\times (-\infty,0))}
    &\leq C_1\|f^-\chi_{Q_1}\|_{L_p(\R^n\times (-\infty,0))}\\
    &\leq C_1\|f^-\|_{L_p(Q_1)}.
    \end{align*}
    Hence the embedding inequality gives
    \[
    \|w\|_{L_\infty(\R^n\times (-1,0))}{\leq C\|w\|_{W^{2,1}_p(\R^n\times (-1,0))}}
    \leq C\|w_t-\Delta w\|_{L_p(\R^n\times (-1,0))}\leq C\|f^-\|_{L_p(Q_1)}.
    \]
    Together with \eqref{e163}, we arrive at \eqref{e161}.
\end{proof}
In our next step, we will establish the elliptic counterpart of the first assertion in Theorem \ref{t4}.
However, it remains unclear whether we can achieve a comparable result for the second assertion \eqref{e471} in the elliptic case.
This disparity arises from the inherent property that solutions to the elliptic Pucci equations in $\R^n$ are generally unbounded, rendering it infeasible to obtain a similar estimate as shown in \eqref{e164} for the elliptic case.

We denote by $q_M$ the critical exponent of generalized maximum principles for fully nonlinear elliptic equations:
\[
q_M:=\inf\left\{p> \max\left\{\frac{n}{2},p_n\right\}:\sup_{u\in W^{2}_{p}(B_1)}\left\{\frac{\|u\|_{L_\infty(B_1)}-\|u\|_{L_\infty(\partial B_1)}}{\|\P^-(D^2u)\|_{L_{p}(B_1)}}\right\}<\infty\right\}.
\]
We also denote by $q_W$ the critical exponent of $W^{2,1}_p$ estimates:
\[
q_W:=\inf\left\{p> \max\left\{\frac{n}{2},p_n\right\}:\sup_{u\in W^{2}_{p}(B_1)}\left\{\frac{\|D^2u\|_{L_{p}(B_{1/2})}}{\|\P^-(D^2u)\|_{L_{p}(B_1)}+\|u\|_{L_{p}(B_1)}}\right\}<\infty\right\}.
\]
Note that $q_M,q_W\leq n-\varepsilon_2$ for some $\varepsilon_2=\varepsilon_2(n,\lambda,\Lambda)>0$ from \cite{E,FS}.
\begin{thm}It follows that $q_M=q_W$.
\end{thm}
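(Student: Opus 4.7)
The plan is to establish both inequalities $q_W \leq q_M$ and $q_M \leq q_W$, following closely the strategy in the proof of Theorem \ref{t4}.

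First, I would argue $q_W \leq q_M$ by invoking Escauriaza's implication from \cite{E}, as was already used in the proof of Theorem \ref{t4}: if the generalized maximum principle holds at exponent $p$, then the $W^2_q$ estimate holds for every $q > p$. Taking $p$ arbitrarily close to $q_M$ and $q$ just above $p$ gives $q_W \leq q_M$ directly from the two definitions.

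For $q_M \leq q_W$, assume the $W^2_p$ estimate holds at some $p > \max\{n/2, p_n\}$, and aim to deduce the generalized maximum principle at the same exponent. Fix $u \in C^{\infty}(\overline{B_1})$ and set $f := -\P^-(D^2 u)$. The inequality $\P^-(D^2 u) \leq \Lambda \Delta u$ gives $-\Lambda \Delta u \leq f$, and the classical $L^p$ maximum principle for the Poisson equation (valid for $p > n/2$ since the Green's function of $\Delta$ on $B_1$ lies in $L^{p'}$) yields the upper bound
\[
\sup_{B_1} u - \sup_{\partial B_1} u \leq C\|f^+\|_{L_p(B_1)}.
\]
For the lower bound, mirroring the parabolic proof, I would introduce an auxiliary $w$ solving $-\P^-(D^2 w) = -f^-\chi_{B_1}$ on $\R^n$ with $w \to 0$ at infinity, so that $w \leq 0$ and $u - \inf_{\partial B_1} u \geq w$ on $B_1$ by the comparison principle. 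Applying the $W^2_p$ estimate to the elliptic rescaling $\tilde{w}(x) := R^{-2} w(Rx)$ on $B_1 \subset B_2$ and sending $R \to \infty$, the $\|\tilde{w}\|_{L_p(B_2)}$ term is bounded by a multiple of $R^{-2}\|w\|_{L_\infty(\R^n)}\cdot R^{n/p}$ (after undoing the change of variables), which vanishes provided $\|w\|_{L_\infty(\R^n)} < \infty$ and $p > n/2$. The resulting bound $\|D^2 w\|_{L_p(\R^n)} \leq C\|f^-\|_{L_p(B_1)}$ combined with Sobolev embedding delivers the $L^\infty$ bound $\|w\|_{L_\infty(\R^n)} \leq C\|f^-\|_{L_p(B_1)}$, completing the lower estimate and hence the maximum principle at $p$.

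The main obstacle, explicitly flagged by the authors in the paragraph preceding the theorem, is establishing $\|w\|_{L_\infty(\R^n)} < \infty$ in the first place: the elliptic counterpart of \eqref{e164} fails because solutions to elliptic Pucci equations in $\R^n$ are generically unbounded, so the easy parabolic ODE bound has no direct analogue. I would circumvent this by first solving the equation on an increasing sequence of balls $B_{R_k} \supset B_1$ with zero Dirichlet data (where $w_{R_k} \leq 0$ follows from the sign of the right-hand side and comparison), using Escauriaza's supercritical $W^2_{n-\varepsilon}$ estimate from \cite{E} together with Sobolev embedding to produce a local $L^\infty$ bound on each $w_{R_k}$ uniform in $R_k$, and extracting a locally uniform limit $w$ on $\R^n$ that is the unique decaying viscosity solution to the desired equation. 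With such a bounded $w$ in hand, the rescaling argument goes through essentially unchanged from the parabolic case, yielding $q_M \leq q_W$ and completing the proof.
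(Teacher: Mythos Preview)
Your reduction to the two inequalities and the treatment of $q_W\le q_M$ via Escauriaza match the paper. The upper bound $\sup_{B_1}u-\sup_{\partial B_1}u\le C\|f\|_{L_p}$ using $-\Delta u\le f$ is also the paper's first step for $q_M\le q_W$.

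The gap is in your construction of the comparison function $w$. You need a bounded solution of $-\P^-(D^2w)=-f^-\chi_{B_1}$ on $\R^n$, and your proposed route to boundedness---apply the interior $W^2_{n-\varepsilon}$ estimate on $B_2$ and embed---is circular: that estimate has the form
\[
\|D^2w_{R}\|_{L_{n-\varepsilon}(B_1)}\le C\big(\|f^-\|_{L_{n-\varepsilon}(B_2)}+\|w_{R}\|_{L_{n-\varepsilon}(B_2)}\big),
\]
so it presupposes exactly the local control of $w_R$ you are trying to establish. The global maximum principle on $B_R$ at exponent $n-\varepsilon$ is no help either, since its constant scales like $R^{2-n/(n-\varepsilon)}\to\infty$. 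In fact, when $\Lambda>\lambda(n-1)$ a bounded entire solution does not exist: for radial data the exterior solution is forced to be $c_1+c_2|x|^{\theta_0}$ with $\theta_0=1-\lambda(n-1)/\Lambda>0$, and the matching condition at $|x|=1$ rules out $c_2=0$. This is precisely the obstruction the authors flag before the theorem. A secondary issue is your last step: on $\R^n$, control of $\|D^2w\|_{L_p(\R^n)}$ alone does not yield $\|w\|_{L_\infty(\R^n)}$ via Sobolev embedding without decay information.

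The paper bypasses boundedness altogether. It keeps the problems on $B_R$ and compares $w^R$ with the explicit subsolution $z(x)=A(|x|^\theta-R^\theta)$ for $\theta\in(\max\{0,\theta_0\},\,2-n/p)$; this interval is nonempty exactly because $p>p_n$, which is built into the definition of $q_W$. The barrier gives only the growth bound $\|w^R\|_{L_\infty(B_R)}\le AR^\theta$, but that is enough: after rescaling, the lower-order term in the $W^2_p$ estimate is $R^{-2+n/p}\|w^R\|_{L_\infty(B_R)}\le AR^{\theta-2+n/p}\to0$. Finally, rather than an $L_\infty$ bound on $w^R$, the paper only needs $\mathrm{osc}_{B_1}w^R$, which it controls through $\|Dw^R\|_{L_{p^*}(B_1)}$ with $p^*=np/(n-p)>n$. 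You should replace your ``bounded $w$ on $\R^n$'' step with this barrier-plus-oscillation argument.
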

\begin{proof}
    The main steps of the proof are similar to those of Theorem \ref{t4}:
    \begin{itemize}
        \item $q_W\leq q_M$ immediately follows by Escauriaza \cite{E}.
        \item To show $q_M\leq q_W$, it suffice to prove that for $p>q_W$ such that $W^2_p$ estimate holds, there is a constant $C_8>0$ such that for $u\in C^2(\overline{B_1})$,
        \begin{equation}\label{e271}
        \|u\|_{L_\infty(B_1)}-\|u\|_{L_\infty(\partial B_1)}\leq C_8\|\P^-(D^2u)\|_{L_p(B_1)}.
        \end{equation}
    \end{itemize}
    We may suppose that $p\leq n-\varepsilon_2$ since the generalized maximum principle is already known for $p> n-\varepsilon_2$.
    Fix $u\in C^2(\overline{B_1})$ and let $f:=\P^-(D^2u)\in C(\overline{B_1})$.
    Then, we have
    \[
    \sup_{B_1}u-\sup_{\partial B_1}u\leq C\|f\|_{L_p(B_1 )}.
    \]
    from $-\Delta u\leq -\P^-(D^2u)=f$ in $B_1$.
    Hence it remains to prove
    \begin{equation}\label{e272}
    -\inf_{B_1}u+\inf_{\partial B_1}u\leq C\|f\|_{L_p(B_1)}.
    \end{equation}
    To obtain \eqref{e272}, we will estimate $u-\inf_{\partial B_1}u$ from below by using a strong solution $w^R\in W^2_{q,\text{loc}}(B_R)$, $q>n-\varepsilon_2$, of
    \[
    \left\{
    \begin{array}{cc}
    -\P^-(D^2w^R)=-f^-\chi_{B_1}     &\mbox{in }B_R,  \\
    w^R=0     &\mbox{in }\partial B_R
    \end{array}
    \quad
    \right.
    \]
    for $R\ge 1$.
    By the comparison principle, $w^R$ is non-positive in $B_R$ and
    \begin{equation}\label{e273}
        \inf_{B_1}u-\inf_{\partial B_1}u\geq \inf_{B_1}w^R-\sup_{\partial B_1}w^R.
    \end{equation}
    Next we aim to show a lower bound of $w^R$.
    Let $\theta \in (\max\{0,1-\lambda(n-1)/\Lambda\}, 2-n/p)$ and let $A>0$ be such that
    \begin{equation}\label{Af}
    -A\theta (\lambda (n-1)-\Lambda(1-\theta))|x|^{-2+\theta}\leq -f^-(x)\chi_{B_1}(x)\quad\mbox{in }\R^n.
    \end{equation}
    Set
    \[
    z(x):=A(|x|^\theta-R^\theta).
    \]
    \textcolor{black}{
    }
    Since $z$ is $C^2$ except at $0$ and $z-\phi$ does not attains its maximum at $0$ for \textcolor{black}{any $\phi\in W^2_q(B_R)$ and $q>n-\varepsilon_2$, together with \eqref{Af}, $z$ is an $L_q$-viscosity subsolution} of
    \[
    -\P^+(D^2z)\leq \textcolor{black}{-f^-(x)\chi_{B_1}(x)}\quad\mbox{in }B_R.
    \]
    By applying the comparison principle to $w^R$ and $z$ (see Theorem 2.10 in \cite{CCKS} for comparison principles), we have $z\leq w^R$ in $B_R$ and hence
    \begin{equation}\label{e274}
        \|w^R\|_{L_\infty(B_R)}\leq AR^{\theta}.
    \end{equation}
    The $W^2_p$ estimate gives
    \begin{align*}
    \|D^2w^R\|_{L_p(B_{R/2})}
    \leq C(\|f^-\|_{L_p(B_1)}+R^{-2+n/p}\|w^R\|_{L_\infty(B_R)})
    \end{align*}
    and together with the Sobolev inequality,
    \begin{align*}
    \|Dw^R\|_{L_{p^*}(B_{R/2})}&\leq C(\|D^2w^R\|_{L_{p}(B_{R/2})}+R^{-2}\|w^R\|_{L_p(B_R)})\\
    &\leq C(\|f^-\|_{L_p(B_1)}+R^{-2+n/p}\|w^R\|_{L_\infty(B_R)}),
    \end{align*}
    where $p^*=pn/(n-p)>n$.
    Moreover, by Morrey's lemma we have
    \[
        \mathrm{osc}_{B_1} w^R\leq C\|Dw^R\|_{L_{p^*}(B_1)}\leq C(\|f^-\|_{L_p(B_1)}+R^{-2+n/p}\|w^R\|_{L_\infty(B_R)}).
    \]
    From \eqref{e273} we obtain
    \[
        -\inf_{B_1}u+\inf_{\partial B_1}u\leq C(\|f^-\|_{L_p(B_1)}+R^{-2+n/p}\|w^R\|_{L_\infty(B_R)}).
    \]
    Noting \eqref{e274} and $\theta<2-n/p$, we send $R\to\infty$ to derive
    \begin{align*}
        -\inf_{B_1}u+\inf_{\partial B_1}u\leq C\|f^-\|_{L_p(B_1)}.
    \end{align*}
    The theorem is proved.
\end{proof}
\subsection{Global \texorpdfstring{$W^2_p$}{1} estimates}
We finish this section {by} providing certain global $W^2_p$ estimates.
Let us consider the following elliptic equation:
\begin{equation}\label{t43}
    \tau u-\P^-(D^2u)=f\quad\mbox{in }\T^n,
\end{equation}
where $\tau>0$, $f\in C(\T^n)$ and $\T^n$ is the flat torus $\R^n/\ZZ^n$ (i.e., $x\in \R^n$ is identified with $x+y$ for $y\in\ZZ^n$).
It is well known that there exists a unique strong solution $u\in W^{2}_q(\T^n)$ for $q\geq1$ of \eqref{t43}.
An important feature of our work is that we obtain uniform estimates in $\tau\in(0,1)$, which is important in the study of ergodic problems.
In what follow, we write
\begin{equation}\label{e431}
    c_\tau:=\tau\int_{\T^n}udx\quad\mbox{and}\quad v:=u-\int_{\T^n}udx .
\end{equation}
For $p\in[1,\infty)$, we say that the interior $W^2_p$ estimate holds if there exists $C>0$ such that for any $\phi\in W^2_p(B_1)$, it follows that
\[
\|D^2\phi\|_{L_p(B_{1/2})}\leq C\left(\|\P^-(D^2\phi)\|_{L_p(B_1)}+\|\phi\|_{L_p(B_1)}\right).
\]
We will use mollifications of periodic functions, defined as follows:
\begin{dfn}
    We fix a mollifier $\eta\in C_0^\infty(\R^n)$ satisfying $\|\eta\|_{L_1(\R^n)}=1$ and $\eta\geq0$ in $\R^n$.
    $u\in C(\T^n)$ may be extended as a function in $\R^n$, defined by $u(x+y)=u(x)$ for $x\in\T^n$ and $y\in\ZZ^n$.
    The mollification $u^\delta$ of $u$ is defined by
    \[
    u^\delta(x) :=\int_{\R^n}\eta^\delta\left(x-y\right) u(y)dy,
    \]
    where $\eta^\delta(x):=\delta^{-n}\eta(\delta^{-1}x)$.
\end{dfn}
Note that the mollification $u^\delta$ is also periodic (i.e., $u^\delta(x+y)=u^\delta(x) $ for $x\in\T^n$ and $y\in\ZZ^n$).
We also have analogous definitions of mollifications of measurable functions and Radon measures.
\begin{lem}\label{lem48}
    Let $p\geq1$ be such that the $W^{2}_p$ estimate holds.
    Then, there exists a constant $C_9>0$ such that for any $\tau\in(0,1)$ and $f\in C(\T^n)$, it follows that
    \[
        \|v\|_{L_p(\T^n)}+\|D^2u\|_{L_p(\T^n)}\leq C_9\left(\|f\|_{L_p(\T^n)}+|c_\tau|\right).
    \]
\end{lem}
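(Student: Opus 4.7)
The plan is to combine the hypothesized interior $W^2_p$ estimate, applied on a finite cover of $\T^n$, with a contradiction/compactness argument that exploits the zero-mean constraint on $v$.

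First, since $u=v+\int_{\T^n}u\,dx$ and the Pucci operator is unaffected by additive constants, the equation $\tau u-\P^-(D^2u)=f$ rewrites as
\[
-\P^-(D^2v)=f-c_\tau-\tau v\quad\text{in }\T^n.
\]
Viewing $v$ as a $\ZZ^n$-periodic function on $\R^n$ and applying the interior $W^2_p$ estimate on unit balls centered at a finite covering of a fundamental domain of $\T^n$, I would sum and use $|\T^n|=1$ together with $\tau<1$ to get
\[
\|D^2v\|_{L_p(\T^n)}\le C\bigl(\|f\|_{L_p(\T^n)}+|c_\tau|+\|v\|_{L_p(\T^n)}\bigr),
\]
with $C$ independent of $\tau$.

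To close the estimate uniformly in $\tau\in(0,1)$ it remains to absorb $\|v\|_{L_p(\T^n)}$ into the right-hand side. I would argue by contradiction: were the bound $\|v\|_{L_p(\T^n)}\le C(\|f\|_{L_p(\T^n)}+|c_\tau|)$ to fail, one could extract sequences $\tau_k\in(0,1)$, $f_k\in C(\T^n)$, and corresponding solutions $u_k$ with $\|v_k\|_{L_p(\T^n)}>k(\|f_k\|_{L_p(\T^n)}+|c_{\tau_k}|)$. Normalizing $\tilde v_k:=v_k/\|v_k\|_{L_p(\T^n)}$ and renormalizing the data accordingly, the previous display yields a uniform $W^2_p(\T^n)$ bound on $\tilde v_k$, while $\|\tilde v_k\|_{L_p(\T^n)}=1$, $\int_{\T^n}\tilde v_k=0$, and the normalized data tend to $0$. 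Passing to a subsequence with $\tau_k\to\tau_\infty\in[0,1]$ and invoking Rellich compactness of $W^2_p(\T^n)\hookrightarrow L_p(\T^n)$, I would obtain a limit $\tilde v_\infty$ with $\|\tilde v_\infty\|_{L_p(\T^n)}=1$ and zero mean, satisfying $\tau_\infty\tilde v_\infty-\P^-(D^2\tilde v_\infty)=0$ on $\T^n$. The strong maximum principle on the compact manifold $\T^n$ (together with the sign of $\P^-$ at an interior extremum when $\tau_\infty>0$: at a positive maximum $D^2\tilde v_\infty\le 0$, forcing $\P^-(D^2\tilde v_\infty)\le 0$, hence $\tilde v_\infty\le 0$ everywhere, and analogously $\tilde v_\infty\ge 0$) forces $\tilde v_\infty$ to be constant, and the zero-mean condition then gives $\tilde v_\infty\equiv 0$, contradicting $\|\tilde v_\infty\|_{L_p(\T^n)}=1$.

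The main obstacle is the passage to the limit inside the nonlinear operator $\P^-(D^2\tilde v_k)$, since weak $L_p$ convergence of $D^2\tilde v_k$ is not by itself enough to identify the limit equation. For $p>n/2$ the Sobolev embedding $W^2_p(\T^n)\hookrightarrow C^0(\T^n)$ upgrades the convergence to the uniform topology and the classical stability theory of viscosity solutions finishes the identification. In the borderline regime (including $p=1$, admissible in view of Theorem \ref{t1} when $\lambda<\Lambda$), I would combine the $W^2_p$ bound with the Krylov--Safonov interior $C^\alpha$ estimate applied to $\tilde v_k$ to recover uniform convergence on $\T^n$, and then invoke the stability of $L_p$-viscosity solutions from \cite{CCKS} to conclude that $\tilde v_\infty$ is an $L_p$-viscosity solution of the limit equation.
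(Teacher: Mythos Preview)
Your overall architecture---interior estimate plus covering, then normalize and run a compactness contradiction to absorb $\|v\|_{L_p}$---matches the paper's proof. The gap is in the limit passage for small $p$.

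Both tools you invoke in the borderline regime fail there. The Krylov--Safonov $C^\alpha$ estimate needs the forcing in $L_n$; after normalization you only control $\|\tilde f_k\|_{L_p}\to 0$, and nothing prevents $\|\tilde f_k\|_{L_n}$ from being unbounded, so no uniform H\"older bound on $\tilde v_k$ follows. The stability theorem in \cite{CCKS} is likewise conditional on the generalized maximum principle (their condition (P)), which in the present notation means $p>q_M\ge n-\varepsilon_2$; the $L_p$-viscosity framework is simply unavailable at $p=1$. Thus for $1\le p\le q_M$ you have no mechanism to upgrade to uniform convergence or to identify the limit equation.

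The paper sidesteps viscosity stability entirely by mollifying. For each fixed $\delta>0$, concavity of $\P^-$ (Jensen's inequality) yields the \emph{pointwise} bound
\[
\tau_N v_N^\delta-\P^-(D^2v_N^\delta)\le f_N^\delta-c_N\quad\text{in }\T^n.
\]
Since $v_N\to v_\infty$ strongly in $L_p$ (Kondrachov, from the $W^2_p$ bound), convolution against $D^2\eta^\delta$ gives $D^2v_N^\delta\to D^2v_\infty^\delta$ uniformly, and the inequality passes to the limit classically:
\[
\tau_\infty v_\infty^\delta-\P^-(D^2v_\infty^\delta)\le 0\quad\text{in }\T^n.
\]
Now $v_\infty^\delta$ is smooth, so one may apply the ordinary maximum principle (if $\tau_\infty>0$) or the strong maximum principle (if $\tau_\infty=0$); combined with $\int_{\T^n}v_\infty^\delta=0$ this forces $v_\infty^\delta\equiv 0$, and sending $\delta\to 0$ gives $v_\infty=0$, the desired contradiction. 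Note that only the \emph{subsolution} inequality is needed, and this elementary mollification argument works uniformly for every $p\ge 1$.
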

\begin{proof}
    Note that $v$ is a strong solution of $\tau v-\P^-(D^2v)=f-c_\tau$ in $\T^n$ and has zero average.
    By using the interior $W^2_p$ estimate and a standard covering argument, we have
    \begin{equation}\label{e482}
        \|D^2u\|_{L_p(\T^n)}=\|D^2v\|_{L_p(\T^n)}\leq C\left(\|f\|_{L_p(\T^n)}+|c_\tau|+\|v\|_{L_p(\T^n)}\right).
    \end{equation}
    Hence it remains to show
    \[
        \|v\|_{L_p(\T^n)}\leq C\left(\|f\|_{L_p(\T^n)}+|c_\tau|\right).
    \]
    {We prove by contradiction. }Suppose that for any $N\in\N$, there exist $\tau_N\in (0,1)$ and $f_N\in C(\T^n)$ such that the strong solution $u_N\in W^2_p(\T^n)$ of
    \begin{equation}\label{e481}
        \tau_N u_N-\P^-(D^2u_N)=f_N\quad\mbox{in }\T^n,
    \end{equation}
    satisfies
    \[
        \|v_N\|_{L_p(\T^n)}\geq N\left(\|f_N\|_{L_p(\T^n)}+|c_N|\right),
    \]
    where $c_N=\tau_N\int_{\T^n} u_N$ and $v_N=u_N-\int_{\T^n} u_N$.
    We shall show that this leads to a contradiction.

    Dividing $v_N$ by $\|v_N\|_{L_p(\T^n)}$, it {suffices} to consider $\|v_N\|_{L_p(\T^n)}=1$ and $\|f_N\|_{L_p(\T^n)},|c_N|\to0$ as $N\to\infty$.
    Then, from \eqref{e482} and the Kondrachov compactness theorem, we obtain $\tau_{\infty}\in [0,1]$ and $v_{\infty}\in W^1_p$ such that, up to a subsequence, $\tau_N\to\tau_{\infty}$ and $v_N\to v_{\infty}$ strongly in $W^1_p$ as $N\to\infty$.
    We have $\|v_{\infty}\|_{L_p(\T^n)}=\lim_{N\to\infty}\|v_N\|_{L_p(\T^n)}=1$.

    Let $v^\delta_N$ and $v^\delta_\infty$ be the mollifications of $v_N$ and $v_\infty$ respectively.
    Note that $v_N^\delta$ and $v^\delta_\infty$ also have zero average.
    Then, from {the} concavity of $\P^-$, we have
    \begin{align*}
        \tau_N v_N^\delta(x)-\P^-(D^2v_N^\delta(x))
        &\leq \int_{\R^n}\eta^\delta\left(x-y\right)(\tau_N v_N-\P^-(D^2v_N))dy\\
        &\leq f_N^\delta(x)-c_N\quad\mbox{in }\T^n,
    \end{align*}
    where $f_N^\delta$ is the mollification of $f_N$.
    Hence sending $N\to\infty$, we have
    \[
    \tau_\infty v^\delta_\infty-\P^-(D^2v^\delta_\infty)\leq 0\quad\mbox{in }\T^n.
    \]
    If $\tau_\infty>0$, we apply the maximum principle to see $v^\delta_\infty\leq 0$ in $\T^n$, and because $\int_{\T^n}v^\delta_\infty=0$, we get $v^\delta_\infty=0$ in $\T^n$.
    If $\tau_\infty=0$, by the strong maximum principle (see \cite{GT} for instance), $v^\delta_\infty$ is constant in $\T^n$ and because $\int_{\T^n}v^\delta_\infty=0$, $v^\delta_\infty=0$ in $\T^n$.
    In any case, we have $v^\delta_\infty=0$ in $\T^n$ and by sending $\delta\to0$, $v_\infty=0$ a.e. in $\T^n$; a contradiction in virtue of $\|v_\infty\|_{L_p(\T^n)}=1$.
\end{proof}
According to Lemma \ref{lem48}, it only remains to estimate $c_\varepsilon$.
To this end, we need some additional conditions.
First, we suppose that $f$ is nonnegative and show the global $W^2_1$ estimate.
\begin{thm}\label{thm49}
    Suppose that $f\in C(\T^n)$ is nonnegative.
    Let $u \in W^2_1(\T^n)$ be the strong solution of \eqref{t43}.
    Then for any $\tau>0$, it follows that
    \[
    \tau\|u\|_{L_1(\T^n)}+\frac{\Lambda-\lambda}{2}\|D^2u\|_{L_1(\T^n)}= \|f\|_{L_1(\T^n)}.
    \]
\end{thm}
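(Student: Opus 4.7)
The plan is to combine the elementary decomposition (3.44),
\[
\P^-(D^2u) = \frac{\Lambda+\lambda}{2}\Delta u - \frac{\Lambda-\lambda}{2}|D^2u|,
\]
with integration over the torus, exploiting that $\T^n$ has no boundary so $\int_{\T^n}\Delta u\,dx = 0$. The signs are arranged so that the nonnegativity of $f$ (and, once established, of $u$) will convert the resulting signed integrals directly into $L_1$ norms, producing the identity rather than merely an inequality.

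First I would check that $u\geq 0$ on $\T^n$. Since $w\equiv 0$ solves $\tau w-\P^-(D^2w)=0\leq f$, the comparison principle on the torus yields $u\geq 0$, so that $\|u\|_{L_1(\T^n)}=\int_{\T^n}u\,dx$, and likewise $\|f\|_{L_1(\T^n)}=\int_{\T^n}f\,dx$ from $f\geq 0$. Next, by interior $W^2_p$ estimates (Caffarelli--Escauriaza, already invoked in Lemma~\ref{lem48}) together with a standard covering argument on the compact manifold $\T^n$, the viscosity solution $u$ is in fact a strong solution lying in $W^2_p(\T^n)$ for some $p>1$. Consequently the pointwise decomposition above holds almost everywhere, and the boundary-free integration by parts $\int_{\T^n}\Delta u\,dx=0$ is rigorously justified.

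Substituting the decomposition into $\tau u-\P^-(D^2u)=f$ and integrating over $\T^n$ I obtain
\[
\tau\int_{\T^n}u\,dx-\frac{\Lambda+\lambda}{2}\int_{\T^n}\Delta u\,dx+\frac{\Lambda-\lambda}{2}\int_{\T^n}|D^2u|\,dx=\int_{\T^n}f\,dx.
\]
The middle integral vanishes by periodicity, and by the sign conditions on $u$, $|D^2u|$ and $f$ the remaining three integrals coincide with the $L_1(\T^n)$ norms appearing in the statement. This gives the claimed equality.

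The main obstacle is purely a regularity matter: making sense of $\int_{\T^n}\Delta u\,dx=0$ and of $|D^2u|\in L_1(\T^n)$. When $\lambda=\Lambda$, the statement degenerates to the trivial identity $\tau\|u\|_{L_1(\T^n)}=\|f\|_{L_1(\T^n)}$ obtained by integrating the linear equation $\tau u-\lambda\Delta u=f$. When $\lambda<\Lambda$, the required $W^2_1$ regularity is supplied by Corollary~\ref{c1} together with Lemma~\ref{lem48}. If one prefers to avoid these tools, a standard safety net is to mollify $f$ to obtain $f_k\in C^\infty(\T^n)$ with $0\leq f_k\to f$, apply classical theory to the smoother problem to verify the identity for each approximate solution $u_k$, and pass to the limit using uniform convergence $u_k\to u$ (via comparison) and the $W^2_1$ bound just mentioned.
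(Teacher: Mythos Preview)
Your proof is correct and follows essentially the same approach as the paper: both use the maximum principle to get $u\ge 0$, invoke the decomposition \eqref{eq3.44} to write $\P^-(D^2u)=\frac{\Lambda+\lambda}{2}\Delta u-\frac{\Lambda-\lambda}{2}|D^2u|$, and integrate over $\T^n$ using $\int_{\T^n}\Delta u\,dx=0$. The paper's proof is terser and simply takes the $W^2_p$ regularity of the solution for granted (it is stated just before the theorem that the viscosity solution is also a strong solution), whereas you spell out this justification more carefully.
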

\begin{proof}
    Since $f$ is nonnegative, the maximum principle implies $u$ is also nonnegative.
    Then, {by \eqref{eq3.44},}
    \begin{align*}
        \int_{\T^n}fdx
        =\int_{\T^n}({\tau} u-\P^-(D^2u))dx
        =\int_{\T^n}\left({\tau} u+\frac{\Lambda-\lambda}{2}|D^2u|\right)dx.
    \end{align*}
\end{proof}
We next suppose that the generalized maximum principle holds {for some} $p>q_M=q_W$.
This corresponds to the condition (P) in \cite{CCKS} and hence we may use the stability property of $L_p$-viscosity solutions (see Theorem 3.8 in \cite{CCKS}).
\begin{thm}\label{thm410}
    Suppose that $p>q_M=q_W$.
    Then, there exists a constant $C_{10}>0$ such that for any $\tau\in (0,1)$ and any $f\in C(\T^n)$, it follows that
    \[
    |c_\tau|\leq C_{10}\|f\|_{L_p(\T^n)}.
    \]
\end{thm}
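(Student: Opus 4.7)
The plan is to adapt the contradiction-and-compactness argument from the proof of Lemma \ref{lem48}. Suppose no such $C_{10}$ exists; then for each $N \in \mathbb{N}$ one may find $\tau_N \in (0,1)$ and $f_N \in C(\T^n)$, with associated viscosity solution $u_N$ of \eqref{t43}, such that $|c_N| > N\|f_N\|_{L_p(\T^n)}$, where $c_N = \tau_N \int_{\T^n} u_N\, dx$. Dividing through by $|c_N|$, we may normalize so that $|c_N| = 1$ and $\|f_N\|_{L_p(\T^n)} \to 0$ as $N \to \infty$.

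Applying Lemma \ref{lem48} to $v_N := u_N - \int_{\T^n} u_N\, dx$ gives the uniform bound
$$\|v_N\|_{L_p(\T^n)} + \|D^2 u_N\|_{L_p(\T^n)} \leq C_9(\|f_N\|_{L_p(\T^n)} + |c_N|) \leq 2C_9.$$
Since $v_N$ satisfies $\tau_N v_N - \P^-(D^2 v_N) = f_N - c_N$, the Rellich--Kondrachov theorem permits us to extract a subsequence along which $\tau_N \to \tau_\infty \in [0,1]$, $c_N \to c_\infty$ with $|c_\infty| = 1$, and $v_N \to v_\infty$ strongly in $W^1_p(\T^n)$ and weakly in $W^2_p(\T^n)$, with $\int_{\T^n} v_\infty\, dx = 0$. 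Because the hypothesis $p > q_W = q_M$ ensures the CCKS condition (P) holds, the stability property of $L_p$-viscosity solutions (Theorem 3.8 of \cite{CCKS}) identifies $v_\infty$ as an $L_p$-viscosity (equivalently, a.e. strong) solution of
$$\tau_\infty v_\infty - \P^-(D^2 v_\infty) = -c_\infty \quad \text{in } \T^n.$$

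It remains to derive a contradiction from this limit equation, $\int v_\infty = 0$, and $|c_\infty| = 1$. Since $p > n/2$, the Sobolev embedding puts $v_\infty \in C^0(\T^n)$, so it attains a maximum at some $x_0$ and a minimum at some $x_1$. Testing the $L_p$-viscosity subsolution property with the constant $\phi \equiv v_\infty(x_0)$ and noting that $g(x) := -c_\infty - \tau_\infty v_\infty(x)$ is continuous: if $g(x_0) < 0$, then by continuity there exist $\theta > 0$ and a neighborhood $\mathcal{O}$ of $x_0$ on which $-\P^-(D^2 \phi) - g = -g \geq \theta$ a.e., forbidding $v_\infty - \phi$ from having a local maximum in $\mathcal{O}$ and so contradicting the max at $x_0$. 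Hence $\tau_\infty v_\infty(x_0) \leq -c_\infty$, and the symmetric supersolution argument at $x_1$ yields $\tau_\infty v_\infty(x_1) \geq -c_\infty$. Together with $v_\infty(x_1) \leq v_\infty(x_0)$ and $\int_{\T^n} v_\infty\, dx = 0$, these inequalities force $c_\infty = 0$, contradicting $|c_\infty| = 1$. The principal obstacle is the stability step: passing the nonlinear operator $\P^-$ through a weak $W^2_p$ limit is precisely what requires condition (P), and this is supplied only because the hypothesis $p > q_M = q_W$ provides Escauriaza's generalized maximum principle; everything else is a routine combination of Lemma \ref{lem48} with the classical viscosity max/min principle.
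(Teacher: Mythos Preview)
Your proof is correct and follows essentially the same approach as the paper: a contradiction argument normalized so that $|c_N|=1$, uniform $W^2_p$ bounds from Lemma~\ref{lem48}, compactness, the CCKS stability result to identify the limit equation, and then the maximum/minimum principle to force $c_\infty=0$. The only difference is cosmetic: the paper splits into the cases $\tau_\infty>0$ (working with $u_N$) and $\tau_\infty=0$ (working with $v_N$), whereas you handle both at once by working with $v_N$ throughout and reading off $\tau_\infty v_\infty(x_0)\le -c_\infty\le \tau_\infty v_\infty(x_1)$, which is slightly cleaner.
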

\begin{proof}
    Let us suppose by contradiction that for any $N\in\N$, there exist $\tau_N\in (0,1)$ and $f_N\in C(\T^n)$ such that
    \[
    |c_N|\geq N\|f_N\|_{L_p(\T^n)},
    \]
    where $c_N=\tau_N\int_{\T^n}u_N$ and $u_N\in W^2_p(\T^n)$ is the strong solution of
    \[
    \tau_N u_N-\P^-(D^2u_N)=f_N\quad\mbox{in }\T^n.
    \]
    Dividing $u_N$ by $|c_N|$, we may suppose $|c_N|=1$ and $\|f_N\|_{L_p(\T^n)}\to0$.
    By using Lemma~\ref{lem48}, we have
    \[
        \|v_N\|_{L_p(\T^n)}+\|D^2u_N\|_{L_p(\T^n)}\leq C\left(\|f_N\|_{L_p(\T^n)}+1\right).
    \]
    We now have two cases.

    1. In the case that, up to a subsequence, $\tau_N$ converges to some $\tau_\infty\in (0,1]$, we can extract a subsequence of $u_N$, which uniformly converges to $u_\infty$ in $\T^n$.
    By using the stability of $L_p$-viscosity solutions (see Theorem 3.8 of \cite{CCKS}), $u_\infty$ is an $L_p$-viscosity solution of
    \[
    \tau_\infty u_\infty-\P^-(D^2u_\infty)=0\quad\mbox{in }\T^n.
    \]
    Therefore we have $u\equiv0$ in $\T^n$, which implies $c_N\to 0$; a contradiction in virtue of $|c_N|=1$.

    2. In the case of $\tau_N\to 0$ as $N\to\infty$, we extract subsequences of $v_N$ which converges uniformly to some function $v_\infty\in C(\T^n)$ in $\T^n$ and $c_N$ which converges to some constant $c_\infty\in\{-1,1\}$.
    We again use the stability of $L_p$-viscosity solutions, to see that $v_\infty$ is an $L_p$-viscosity solution of
    \[
    -\P^-(D^2v_\infty)=-c_\infty\quad\mbox{in }\T^n.
    \]
    It is easy to see that $c_\infty=0$.
    Indeed, let us consider the maximum (resp., minimum) point $x_0\in\T^n$ of $v_\infty$.
    Then, from the definition of $L_p$-viscosity solutions, we have $c_\infty\geq0$ (resp., $c_\infty\leq0$).
    This is a contradiction with $c_\infty\in \{-1,1\}$.
\end{proof}
By applying our regularity results, we shall present {the} existence and uniqueness results.
\begin{thm}\label{thm413}
    Let  $f\in L_p(\T^n)$ for {some} $p>q_M=q_W$.
    Then there exists a unique strong solution $u\in W^2_p(\T^n)$ of \eqref{t43}.
    Moreover there exists a constant $C_{11}>0$ depending only on $n,\lambda$ and $\Lambda$ such that
    \[
    |c_\tau|+\|D^2u\|_{L_p(\T^n)}\leq C_{11}\|f\|_{L_p(\T^n)}.
    \]
\end{thm}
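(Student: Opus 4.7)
The strategy is to construct $u$ by approximation and then invoke the $L_p$-viscosity stability theorem of \cite{CCKS}. First, I will pick a sequence $f_N\in C^{\infty}(\T^n)$ with $f_N\to f$ in $L_p(\T^n)$. For each $N$, standard Perron-method arguments together with the classical comparison principle for continuous viscosity solutions on $\T^n$ (the constants $\pm \|f_N\|_{L_\infty}/\tau$ serve as global barriers because $\tau>0$) yield a unique continuous viscosity solution $u_N$ of
\[
\tau u_N-\P^-(D^2u_N)=f_N\quad\mbox{in }\T^n.
\]
Since $f_N\in C(\T^n)$, Theorem~\ref{thm410} and Lemma~\ref{lem48} apply under our hypothesis $p>q_M=q_W$ and yield the $N$-uniform (and $\tau$-uniform for $\tau\in(0,1)$) bound
\[
|c^\tau_N|+\|v_N\|_{L_p(\T^n)}+\|D^2u_N\|_{L_p(\T^n)}\leq C\|f_N\|_{L_p(\T^n)},
\]
where $v_N:=u_N-\int_{\T^n}u_N$ and $c^\tau_N:=\tau\int_{\T^n}u_N$.

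Next, since $v_N$ has zero mean, the Poincar\'e inequality on $\T^n$ upgrades this to a uniform bound on $\|v_N\|_{W^2_p(\T^n)}$. The inequality $p>q_M\geq n/2$ gives the Sobolev embedding $W^2_p(\T^n)\hookrightarrow C^{0,\alpha}(\T^n)$ for some $\alpha>0$; hence, along a subsequence, $v_N\to v$ strongly in $C^0$ and weakly in $W^2_p$, while $c^\tau_N\to c^\tau\in\R$. Setting $u:=v+c^\tau/\tau$, we obtain $u_N\to u$ uniformly on $\T^n$. Each $u_N$ is a continuous viscosity solution with continuous right-hand side and hence an $L_p$-viscosity solution, so by the stability theorem (Theorem~3.8 of \cite{CCKS}), the limit $u$ is an $L_p$-viscosity solution of $\tau u-\P^-(D^2u)=f$ on $\T^n$. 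Applying the interior $W^2_p$ estimate (valid at $p>q_W$) on a finite periodic covering of $\T^n$ gives $u\in W^2_p(\T^n)$, so $u$ is an $L_p$-strong solution. The quantitative bound $|c_\tau|+\|D^2u\|_{L_p(\T^n)}\leq C_{11}\|f\|_{L_p(\T^n)}$ then follows by passing to the limit in the $N$-uniform estimate above, using convergence of $c^\tau_N\to c^\tau$ and the weak lower semicontinuity of the $L_p$ norm.

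For uniqueness, suppose $u_1,u_2\in W^2_p(\T^n)$ are two $L_p$-strong solutions and set $w:=u_1-u_2$. Using the inf-representation of the concave $1$-homogeneous operator $\P^-$, one has $\P^-(D^2u_1(x))-\P^-(D^2u_2(x))=a^{ij}(x)D_{ij}w(x)$ a.e., for some measurable symmetric matrix $a^{ij}(x)$ with eigenvalues in $[\lambda,\Lambda]$, so $w$ is a strong solution of the linear uniformly elliptic equation $\tau w-a^{ij}D_{ij}w=0$ on $\T^n$. Since $w\in C^0(\T^n)$ attains its maximum at some point $x_0$, the ABP/Aleksandrov maximum principle on a small ball about $x_0$ (applied to $w-\max w$, whose right-hand side is $\leq 0$) forces $w$ to be locally constant at $x_0$; a standard connectedness argument gives $w\equiv\mathrm{const}$ on $\T^n$, and the equation together with $\tau>0$ then yields $w\equiv 0$.

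The main delicate point is ensuring that the stability theorem of \cite{CCKS} applies in our setting, which requires uniform convergence $u_N\to u$. This is exactly what the embedding $W^2_p\hookrightarrow C^0$ at $p>n/2$ secures, and this is why the restriction $p>q_M\geq n/2$ (beyond just requiring the $W^2_p$ estimate) is used in an essential way. The other routine check is that the interior $W^2_p$ estimate patches together on $\T^n$, but this is immediate from compactness and periodicity.
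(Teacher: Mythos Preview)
Your existence argument is essentially identical to the paper's: approximate $f$ by smooth data, solve the regularized problem, apply Lemma~\ref{lem48} and Theorem~\ref{thm410} for uniform $W^2_p$ bounds, extract a uniformly convergent subsequence via the Sobolev embedding $W^2_p\hookrightarrow C^0$ (valid since $p>q_M\ge n/2$), and pass to the limit using the stability result in \cite{CCKS}. The paper is terser about the compactness step, but the logic is the same.

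The one place you diverge is uniqueness. The paper simply invokes the $L_p$-viscosity comparison principle (Theorem~2.10 of \cite{CCKS}), which applies directly once $p>q_M$. Your linearization $\P^-(D^2u_1)-\P^-(D^2u_2)=a^{ij}D_{ij}w$ is fine pointwise, but the claim that the Aleksandrov/ABP estimate on a small ball ``forces $w$ to be locally constant at $x_0$'' is not quite right: ABP gives only $\sup_B w\le \sup_{\partial B}w$ when the right-hand side is nonpositive, which shows the maximum is attained on every sphere about $x_0$, not that $w$ is constant on a neighborhood. What you need for local constancy is the \emph{strong} maximum principle for $W^2_p$ strong solutions (or the Harnack inequality), not ABP. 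This is easily repaired, and the paper's route via the CCKS comparison principle avoids the issue altogether.
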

\begin{proof}
    We consider the approximating equation:
    \[
    \tau u^\delta-\P^-(D^2u^\delta)=f^\delta\quad\mbox{in }\T^n
    \]
    for the mollification $f^\delta$ of $f$.
    From Lemma \ref{lem48} and Theorem \ref{thm410}, we obtain
    \[
    |c^\delta_\tau|+\|v^\delta\|_{L_p(\T^n)}+\|D^2u\|_{L_p(\T^n)}\leq C\|f\|_{L_p(\T^n)},
    \]
    where $c^\delta_\tau$ and $v^\delta$ are defined {in} the same way as in \eqref{e431}.
    Extracting a subsequence, there exists $u\in W^2_p(\T^n)$ such that $u^\delta$ converges to $u$ uniformly in $\T^n$.
    From the stability property of $L_p$-viscosity solution (see Theorem 3.8 of \cite{CCKS}), $u$ is an $L_p$-viscosity (and strong) solution of \eqref{t43}.
    Moreover, the uniqueness of the solution follows from the comparison principle (see Theorem 2.10 of \cite{CCKS}).
\end{proof}
As we mentioned in the introduction, if the right-hand side $f$ is only $L_1$ or in the space of Radon measures, the notion of $L_p$-viscosity solutions is no longer suitable, since we cannot expect continuous solutions.
In the next theorem, we instead provide the existence of the largest subsolution for the Pucci equation.
Then, by Proposition \ref{prop415}, we will also show the consistency property: if the largest subsolution is $W^2_1$, then it is a strong solution.
\begin{thm}\label{thm414}
    Let $\lambda<\Lambda$ and $\mu$ be a nonnegative Radon measure in $\T^n$.
     Then there exists {a unique} $u\in L_1(\T^n)$ with the following properties:
    \begin{enumerate}
        \item[(i)] For any nonnegative function $\phi\in C^\infty_0(\R^n)$ and any constant symmetric matrix $A$ such that $\lambda \mathrm{I}\leq A\leq\Lambda \mathrm{I}$, we have
    \[
    \int_{\R^n}u(\tau\phi-\Tr AD^2\phi)dx\leq \int_{\R^n}\phi d\mu.
    \]
        \item[(ii)] If $v\in L_1(\T^n)$ satisfies (i), then $v\leq u$ a.e. in $\T^n$.
    \end{enumerate}
    Moreover it holds that
    \begin{equation}\label{e4121}
        \tau\|u\|_{L_1(\T^n)}+\frac{\Lambda-\lambda}{2}\|D^2u\|_{\M(\T^n)}\leq \|\mu\|_{\M(\T^n)}.
    \end{equation}
\end{thm}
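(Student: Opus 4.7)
The strategy is to approximate $\mu$ by its mollifications, solve the smooth regularised problem, and pass to the limit using the global $W^2_1$ bound of Theorem~\ref{thm49}; the maximality assertion is handled by a separate mollification--comparison argument for any competitor $v$.

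Set $\mu^\delta := \eta^\delta * \mu \in C^\infty(\T^n)$, which is periodic and nonnegative. By Theorem~\ref{thm413} (or by classical Schauder theory, since $\mu^\delta$ is smooth) there is a unique classical solution $u^\delta \in C^2(\T^n)$ of $\tau u^\delta - \P^-(D^2 u^\delta) = \mu^\delta$, and $u^\delta \geq 0$ by the maximum principle. Theorem~\ref{thm49} then yields
\[
\tau\|u^\delta\|_{L_1(\T^n)} + \tfrac{\Lambda-\lambda}{2}\|D^2 u^\delta\|_{L_1(\T^n)} = \|\mu^\delta\|_{L_1(\T^n)} \leq \|\mu\|_{\M(\T^n)}.
\]
The uniform bound in $W^2_1(\T^n)$, together with Rellich--Kondrachov applied to $u^\delta \in W^1_1(\T^n)$, gives a subsequence $u^\delta \to u$ strongly in $L_1(\T^n)$, while $D^2 u^\delta$ converges weakly-$*$ in $\M(\T^n)$ (the limit must coincide with the distributional $D^2 u$). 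For any constant symmetric $A$ with $\lambda I \le A \le \Lambda I$ one has $\Tr(AM) \geq \P^-(M)$, hence $\tau u^\delta - \Tr(AD^2 u^\delta) \leq \mu^\delta$ pointwise; testing against $\phi \in C_0^\infty(\R^n)$ with $\phi \ge 0$ and integrating by parts,
\[
\int_{\R^n} u^\delta\bigl(\tau\phi - \Tr(AD^2\phi)\bigr)\, dx \leq \int_{\R^n} \phi\, \mu^\delta\, dx.
\]
Since $\tau\phi - \Tr(AD^2\phi)$ is bounded and continuous and $\mu^\delta \to \mu$ against $C_0$ test functions, both sides pass to the limit, establishing (i). The estimate \eqref{e4121} then follows from the lower semicontinuity of the $L_1$ norm and of the total variation norm under weak-$*$ convergence.

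For the maximality (ii), let $v \in L_1(\T^n)$ satisfy (i) and set $v^\delta := \eta^\delta * v$ (with $\eta$ taken symmetric). Applying (i) with test function $\eta^\delta * \phi$ (still nonnegative and compactly supported) and moving the mollifier onto $v$,
\[
\int_{\R^n} v^\delta\bigl(\tau\phi - \Tr(AD^2\phi)\bigr)\, dx \leq \int_{\R^n} \phi\, \mu^\delta\, dx.
\]
Since $v^\delta \in C^\infty$, integration by parts and the arbitrariness of $\phi \ge 0$ convert this into the pointwise inequality $\tau v^\delta - \Tr(AD^2 v^\delta) \leq \mu^\delta$ on $\T^n$ for every admissible $A$; taking the supremum over $A$ and using $\P^-(M) = \inf_A \Tr(AM)$ yields $\tau v^\delta - \P^-(D^2 v^\delta) \leq \mu^\delta$ pointwise. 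Because $\tau > 0$, evaluating at a maximum of $v^\delta - u^\delta$ and monotonicity of $\P^-$ give the classical comparison $v^\delta \leq u^\delta$; passing $\delta \to 0$ proves $v \leq u$ a.e. Uniqueness of $u$ follows immediately from (ii).

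\textbf{Main obstacle.} The delicate step is (ii): the assumption (i) only tests $v$ against constant-coefficient linear operators $\Tr(AD^2\cdot)$, whereas one needs a pointwise inequality for the fully nonlinear operator $\P^-(D^2 v)$ in order to invoke comparison. Mollification is the device that bridges the two: it moves the derivatives onto $v$, producing a smooth $v^\delta$ for which the family of weak linear inequalities (one for each admissible $A$) collapses via the representation $\P^-(M) = \inf_A \Tr(AM)$ into a single pointwise nonlinear inequality, after which the classical comparison principle closes the argument.
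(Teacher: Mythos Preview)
Your proof is correct and follows essentially the same approach as the paper: mollify $\mu$, solve the smooth problem, use Theorem~\ref{thm49} for uniform $W^2_1$ bounds, pass to the limit for (i) and \eqref{e4121}, and for (ii) mollify the competitor $v$, convert the family of linear weak inequalities into the pointwise nonlinear inequality $\tau v^\delta-\P^-(D^2v^\delta)\le \mu^\delta$ via $\P^-(M)=\inf_A\Tr(AM)$, and apply comparison with $u^\delta$. The only cosmetic difference is that the paper tests (i) directly against the kernel $y\mapsto\eta^\delta(x-y)$ rather than against $\eta^\delta*\phi$ (your version requires the symmetry of $\eta$, which you noted), and the paper records the equality $\|f^\delta\|_{L_1(\T^n)}=\|\mu\|_{\M(\T^n)}$ rather than your inequality; neither affects the argument.
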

\begin{proof}
    Let $f^\delta$ be the mollification of $\mu$, defined by
    \[
    f^\delta(x)=\int_{\R^n}\eta^\delta(x-y) d\mu(y).
    \]
    Then there exists a classical solution $u^\delta\in C^2(\T^n)$ of
    \[
    \tau u^\delta-\P^-(D^2u^\delta)=f^\delta\quad\mbox{in }\T^n.
    \]
    From Theorem \ref{thm49}, we also have
    \begin{equation*}
        \tau\|u^\delta\|_{L_1(\T^n)}+\frac{\Lambda-\lambda}{2}\|D^2u^\delta\|_{L_1(\T^n)}= \|f^\delta\|_{L_1(\T^n)}= \|\mu\|_{\M(\T^n)}.
    \end{equation*}
    From the Kondrachov compactness theorem, there exists $u\in L_1(\T^n)$ such that, up to subsequence, $u^\delta$ converge to $u$ in $L_1(\T^n)$.
    Moreover, by the weak-$\ast$ compactness in $\M(\T^n)$, we also have $D^2u^\delta\overset{\ast}{\rightharpoonup}D^2u$ in $\M(\T^n)$ and
    \[
    \|D^2u\|_{\M(\T^n)}\leq \liminf_{\delta\to0}\|D^2u^\delta\|_{\M(\T^n)}=\liminf_{\delta\to0}\|D^2u^\delta\|_{L_1(\T^n)}.
    \]
    Thus we obtain \eqref{e4121}.
    Since
    \begin{equation*}
    \P^-(X)=\inf\left\{\Tr AX:\lambda\mathrm{I}\leq A\leq \Lambda\mathrm{I}\right\}
    \end{equation*}
    for any symmetric matrix $X$, $u^\delta$ satisfies
    \begin{equation*}
    \tau u^\delta-\Tr AD^2u^\delta\leq f^\delta\quad\mbox{in }\T^n.
    \end{equation*}
    Testing nonnegative $\phi\in C^\infty_0(\R^n)$, we have
    \begin{equation}\label{e4111}
        \int_{\R^n}u^\delta(\tau\phi-\Tr AD^2\phi)dx\leq \int_{\R^n}\phi f^\delta dx=\int_{\R^n}\phi^\delta d\mu.
    \end{equation}
    By letting $\delta\to0$ in \eqref{e4111}, (i) follows.

    To prove (ii), fix $v\in L_1(\T^n)$ satisfying (i).
    Let $v^\delta$ be the mollification of $v$.
    Then from (i), we have
    \begin{align*}
        \tau v^\delta(x)-\Tr AD^2v^\delta(x)
        &=\int_{\R^n}v(y)(\tau\eta^\delta(x-y)-\Tr AD^2\eta^\delta(x-y))dx\\
        &\leq f^\delta(x) \quad\mbox{in }\T^n
    \end{align*}
    and by taking the maximum of $A$ on the left-hand side, $\tau v^\delta-\P^-(D^2v^\delta)\leq f^\delta$ in $\T^n$.
    Hence the comparison principle gives $v^\delta\leq u^\delta$ in $\T^n$.
    We conclude the result by sending $\delta\to0$.
\end{proof}
\begin{prop}\label{prop415}
    Let $f\in L_1(\T^n)$ be nonnegative.
    Let $u$ be as in Theorem \ref{thm414} with the Radon measure $\mu$, defined by $\mu(B)=\int_{B}fdx$ for each Borel set $B\subset\R^n$.
    If $u\in W^2_1(\T^n)$, then it follows that
    \begin{equation}\label{e4151}
        \tau u-\P^-(D^2u)=f\quad\mbox{a.e. in }\T^n.
    \end{equation}
\end{prop}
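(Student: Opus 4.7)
The plan is to prove the identity in two steps: first establish the subsolution inequality $\tau u - \P^-(D^2 u) \le f$ a.e., then obtain the reverse inequality by contradiction via a Perron-type comparison against an auxiliary $L_\infty$-strong solution supplied by Theorem \ref{thm413}.

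For the easy direction, since $u \in W^2_1(\T^n)$, property (i) of Theorem \ref{thm414} combined with integration by parts gives $\int \phi(\tau u - \Tr AD^2 u)\,dx \le \int \phi f\,dx$ for every constant symmetric $A$ with $\lambda I \le A \le \Lambda I$ and every nonnegative $\phi \in C^\infty_0(\R^n)$, whence $\tau u - \Tr AD^2 u \le f$ a.e.\ for each such $A$. Taking the supremum over a countable dense subset of $[\lambda I, \Lambda I]$ and using $\sup_A(\tau u - \Tr AD^2 u) = \tau u - \P^-(D^2 u)$ yields $\tau u - \P^-(D^2 u) \le f$ a.e.

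Set $h := f - (\tau u - \P^-(D^2 u)) \ge 0$ a.e.\ and suppose for contradiction that $\|h\|_{L_1(\T^n)} > 0$. Pick $\epsilon_0 > 0$ with $|\{h \ge \epsilon_0\}| > 0$, and by inner regularity choose a compact $K \subset \{h \ge \epsilon_0\}$ with $|K| > 0$. Let $\tilde h := \epsilon_0\chi_K \in L_\infty(\T^n)$, so $0 \le \tilde h \le h$ a.e.\ and $\|\tilde h\|_{L_1} = \epsilon_0|K| > 0$. By Theorem \ref{thm413} applied with any $p > q_M = q_W$, there exists a unique $L_p$-strong solution $\tilde v \in W^2_p(\T^n) \subset W^2_1(\T^n)$ of $\tau\tilde v - \P^-(D^2\tilde v) = \tilde h$. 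By the standard maximum principle (testing at the minimum of $\tilde v$ with a constant function) one obtains $\tilde v \ge 0$, and integrating the equation as in Theorem \ref{thm49} yields $\tau\|\tilde v\|_{L_1} + \frac{\Lambda-\lambda}{2}\|D^2\tilde v\|_{L_1} = \|\tilde h\|_{L_1} > 0$, so $\tilde v$ is not identically zero.

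Now $u + \tilde v$ satisfies property (i) of Theorem \ref{thm414} with $\mu = f\,dx$: combining integration by parts with the a.e.\ inequalities $\tau u - \Tr AD^2 u \le f - h$ and $\tau\tilde v - \Tr AD^2\tilde v \le \tilde h \le h$ gives
\[
\int(u+\tilde v)(\tau\phi - \Tr AD^2\phi)\,dx \le \int \phi(f-h)\,dx + \int \phi h\,dx = \int \phi f\,dx.
\]
By the maximality property (ii) of Theorem \ref{thm414} we conclude $u + \tilde v \le u$ a.e., so $\tilde v \le 0$ a.e., which together with $\tilde v \ge 0$ forces $\tilde v \equiv 0$, contradicting $\|\tilde v\|_{L_1} > 0$. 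The main obstacle is the construction of this perturbation $\tilde v$: a smooth bump supported in $\{h > 0\}$ would not in general satisfy a pointwise bound $\phi \le h$ because this set may have empty interior, but the choice $\tilde h = \epsilon_0\chi_K$ yields bounded measurable data dominated by $h$ and still regular enough for Theorem \ref{thm413} to apply.
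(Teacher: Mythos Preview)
Your proof is correct and follows essentially the same approach as the paper: first establish $\tau u-\P^-(D^2u)\le f$ a.e.\ from property~(i), then argue by contradiction using an auxiliary strong solution $\tilde v$ of $\tau\tilde v-\P^-(D^2\tilde v)=\theta\chi_B$ supplied by Theorem~\ref{thm413}, showing that $u+\tilde v$ again satisfies~(i) and invoking maximality~(ii) to force $\tilde v\le 0$. The only cosmetic differences are that the paper works directly with a measurable set $B$ rather than passing to a compact $K$, and verifies~(i) for $u+w$ via the superadditivity of $\P^-$ rather than through the linear operators $\Tr A D^2$.
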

\begin{proof}
    From (i) of Theorem \ref{thm414} and by taking the maximum of $A$, it follows that
    \[
    \tau u-\P^-(D^2u)\leq f\quad\mbox{a.e. in }\T^n.
    \]
    To prove \eqref{e4151}, we suppose, on the contrary, that there exist a measurable set $B\subset \T^n$ and $\theta>0$ such that $|B|>0$ and
    \[
    \tau u-\P^-(D^2u)\leq f-\theta\chi_B \quad\mbox{a.e. in }\T^n.
    \]
    From Theorem \ref{thm413}, there exists a unique $L_p$-strong solution $w\in W^2_p(\T^n)$ of
    \[
    \tau w-\P^-(D^2w)= \theta\chi_B \quad\mbox{a.e. in }\T^n
    \]
    for $p>q_M$.
    From the comparison principle, it follows that $w\geq0$ and $w\not\equiv0$ in $\T^n$.
    We compute
    \[
     \tau (u+w)-\P^-(D^2(u+w))\leq f \quad\mbox{a.e. in }\T^n.
    \]
    Hence $u+w$ satisfies (i), which together with (ii), implies $u+w\leq u$ in $\T^n$.
    Therefore $w\leq0$ in $\T^n$; a contradiction to $w\geq0$ and $w\not\equiv0$ in $\T^n$.
\end{proof}
\begin{rem}
    The estimate
    \[
    \tau\|u\|_{L_p(\R^n)}\leq C\|f\|_{L_p(\R^n)}
    \]
    of the solution $u$ to \eqref{t43} with $\T^n$ replaced by $\R^n$, has already been obtained for any $\tau>0$ and $p\geq n$ in \cite{Kbook}.
    See also \cite{DKL,K10} for the $W^2_p$ estimates of elliptic equations in $\R^n$.
    On the other hand, our method allows to extend the range of the exponent $p$ although we need the compactness of the domain to use the Kondrachov theorem.
    Our approach in Theorem \ref{thm410} also heavily relies on the stability properties of $L_p$-viscosity solutions.
    It is still open whether there exists any appropriate notion of solutions for equations with the $L_1$-right hand side $f$ (even when $f\geq0$), which possesses stability properties.
\end{rem}

\noindent {\bf Data availability}
Data sharing not applicable to this article and no data sets were generated or analyzed during the current study.
\bibliographystyle{plain}
\bibliography{sample}

\begin{thebibliography}{10}

\bibitem{A61}
A.~D. Aleksandrov.
\newblock Certain estimates for the {D}irichlet problem.
\newblock {\em Soviet Math. Dokl.}, 11(3):1151--1154, 1961.

\bibitem{A63}
A.~D. Aleksandrov.
\newblock Uniqueness conditions and bounds for the solution of the {D}irichlet
  problem.
\newblock {\em Vestnik Leningrad. Univ. Ser. Mat. Meh. Astronom.}, 18(3):5--29,
  1963.

\bibitem{ASS}
Scott~N. Armstrong, Luis~E. Silvestre, and Charles~K. Smart.
\newblock Partial regularity of solutions of fully nonlinear, uniformly
  elliptic equations.
\newblock {\em Comm. Pure Appl. Math.}, 65(8):1169--1184, 2012.

\bibitem{AIM}
K.~Astala, T.~Iwaniec, and G.~Martin.
\newblock Pucci's conjecture and the {A}lexandrov inequality for elliptic
  {PDE}s in the plane.
\newblock {\em J. Reine Angew. Math.}, 591:49--74, 2006.

\bibitem{CCKS}
L.~Caffarelli, M.~G. Crandall, M.~Kocan, and A.~Swi\c{e}ch.
\newblock On viscosity solutions of fully nonlinear equations with measurable
  ingredients.
\newblock {\em Comm. Pure Appl. Math.}, 49(4):365--397, 1996.

\bibitem{C}
Luis~A. Caffarelli.
\newblock Interior a priori estimates for solutions of fully nonlinear
  equations.
\newblock {\em Ann. of Math. (2)}, 130(1):189--213, 1989.

\bibitem{CFKS}
M.~G. Crandall, K.~Fok, M.~Kocan, and A.~\'{S}wi\c{e}ch.
\newblock Remarks on nonlinear uniformly parabolic equations.
\newblock {\em Indiana Univ. Math. J.}, 47(4):1293--1326, 1998.

\bibitem{DKL}
Hongjie Dong, N.~V. Krylov, and Xu~Li.
\newblock On fully nonlinear elliptic and parabolic equations with {VMO}
  coefficients in domains.
\newblock {\em Algebra i Analiz}, 24(1):53--94, 2012.

\bibitem{E}
Luis Escauriaza.
\newblock {$W^{2,n}$} a priori estimates for solutions to fully nonlinear
  equations.
\newblock {\em Indiana Univ. Math. J.}, 42(2):413--423, 1993.

\bibitem{Evans}
Lawrence~C. Evans.
\newblock Periodic homogenisation of certain fully nonlinear partial
  differential equations.
\newblock {\em Proc. Roy. Soc. Edinburgh Sect. A}, 120(3-4):245--265, 1992.

\bibitem{FS}
E.~B. Fabes and D.~W. Stroock.
\newblock The {$L\sp p$}-integrability of {G}reen's functions and fundamental
  solutions for elliptic and parabolic equations.
\newblock {\em Duke Math. J.}, 51(4):997--1016, 1984.

\bibitem{GT}
David Gilbarg and Neil~S. Trudinger.
\newblock {\em Elliptic partial differential equations of second order}, volume
  224 of {\em Grundlehren der mathematischen Wissenschaften [Fundamental
  Principles of Mathematical Sciences]}.
\newblock Springer-Verlag, Berlin, second edition, 1983.

\bibitem{K}
N.~V. Krylov.
\newblock Sequences of convex functions, and estimates of the maximum of the
  solution of a parabolic equation.
\newblock {\em Sibirsk. Mat. \v{Z}.}, (no. 2,):290--303, 478, 1976.

\bibitem{Kbook}
N.~V. Krylov.
\newblock {\em Nonlinear elliptic and parabolic equations of the second order},
  volume~7 of {\em Mathematics and its Applications (Soviet Series)}.
\newblock D. Reidel Publishing Co., Dordrecht, 1987.
\newblock Translated from the Russian by P. L. Buzytsky [P. L. Buzytski\u{\i}].

\bibitem{K10}
Nicolai~V. Krylov.
\newblock On {B}ellman's equations with {VMO} coefficients.
\newblock {\em Methods Appl. Anal.}, 17(1):105--121, 2010.

\bibitem{KT}
Hung-Ju Kuo and Neil~S. Trudinger.
\newblock New maximum principles for linear elliptic equations.
\newblock {\em Indiana Univ. Math. J.}, 56(5):2439--2452, 2007.

\bibitem{L}
Fang-Hua Lin.
\newblock Second derivative {$L^p$}-estimates for elliptic equations of
  nondivergent type.
\newblock {\em Proc. Amer. Math. Soc.}, 96(3):447--451, 1986.

\bibitem{M}
Connor Mooney.
\newblock A proof of the {K}rylov-{S}afonov theorem without localization.
\newblock {\em Comm. Partial Differential Equations}, 44(8):681--690, 2019.

\bibitem{NT23}
Thialita~M. Nascimento and Eduardo~V. Teixeira.
\newblock New regularity estimates for fully nonlinear elliptic equations.
\newblock {\em J. Math. Pures Appl. (9)}, 171:1--25, 2023.

\bibitem{NTar}
Thialita~M. Nascimento and Eduardo~V. Teixeira.
\newblock On the sharp {H}essian integrability conjecture in the plane.
\newblock {\em J. Differential Equations}, 414:890--903, 2025.

\bibitem{P}
Carlo Pucci.
\newblock Operatori ellittici estremanti.
\newblock {\em Ann. Mat. Pura Appl. (4)}, 72:141--170, 1966.

\bibitem{Tso}
Kaising Tso.
\newblock On an {A}leksandrov-{B}akelman type maximum principle for
  second-order parabolic equations.
\newblock {\em Comm. Partial Differential Equations}, 10(5):543--553, 1985.

\bibitem{W1}
Lihe Wang.
\newblock On the regularity theory of fully nonlinear parabolic equations. {I}.
\newblock {\em Comm. Pure Appl. Math.}, 45(1):27--76, 1992.

\bibitem{W2}
Lihe Wang.
\newblock On the regularity theory of fully nonlinear parabolic equations.
  {II}.
\newblock {\em Comm. Pure Appl. Math.}, 45(2):141--178, 1992.

\end{thebibliography}
\end{document}